\documentclass[11pt,leqno]{amsart}
\usepackage[mathcal]{eucal}
\usepackage[utf8]{inputenc}
\usepackage{amsfonts,amssymb}
\usepackage{hyperref}

\usepackage{tikz}
\tikzset{every picture/.style={line width=0.75pt}} 

\numberwithin{equation}{section}

\theoremstyle{plain}
\newtheorem{lemma}{Lemma}[section]
\newtheorem{proposition}[lemma]{Proposition}
\newtheorem{theorem}[lemma]{Theorem}
\newtheorem{corollary}[lemma]{Corollary}

\theoremstyle{remark}
\newtheorem{remark}[lemma]{Remark}
\newtheorem*{notation*}{Notation}

\newtheorem{questions*}{Questions}

\theoremstyle{definition}

\newtheorem{example}[lemma]{Example}

\DeclareMathOperator{\Stab}{Stab}
\DeclareMathOperator{\End}{End}
\DeclareMathOperator{\len}{len}
\DeclareMathOperator{\ima}{im}

\def\N{\mathbb{N}}
\def\Z{\mathbb{Z}}
\def\a{\textbf{a}}
\def\b{\textbf{b}}
\def\h{\textbf{h}}
\def\g{\textbf{g}}
\def\Fa{\mathcal{F}}
\def\Ca{\mathcal{C}}

\makeatletter
\def\moverlay{\mathpalette\mov@rlay}
\def\mov@rlay#1#2{\leavevmode\vtop{%
   \baselineskip\z@skip \lineskiplimit-\maxdimen
   \ialign{\hfil$\m@th#1##$\hfil\cr#2\crcr}}}
\newcommand{\charfusion}[3][\mathord]{
    #1{\ifx#1\mathop\vphantom{#2}\fi
        \mathpalette\mov@rlay{#2\cr#3}
      }
    \ifx#1\mathop\expandafter\displaylimits\fi}
\makeatother

\makeatletter
\def\author@andify{%
  \nxandlist {\unskip ,\penalty-1 \space\ignorespaces}%
    {\unskip {} \@@and~}%
    {\unskip \penalty-2 \space \@@and~}%
}
\makeatother
\title[Strong conciseness in profinite metabelian groups]{Strong conciseness in profinite metabelian groups}

\author[A.~Zozaya]{Andoni Zozaya} 
\address{Department of Statistics, Computer Science and Mathematics, Public University of Navarra, Campus of Arrosadia, 31006 Pamplona, Spain. Also affiliated with: INAMAT$^{2}$, Campus of Arrosadia, 31006 Pamplona, Spain}
\email{andoni.zozaya@unavarra.es}

\date{}

\makeatletter
\@namedef{subjclassname@2020}{\textup{2020} Mathematics Subject
  Classification}
\makeatother
\subjclass[2020]{20F10, 20F16, 20F18, 20E18, 20E22}
\keywords{Group word, strong conciseness, metabelian}

\begin{document}

\begin{abstract}

A group word $w$ is said to be strongly concise in a class $\Ca$ of profinite groups if, for every group $G \in \Ca$ such that $w$ takes less than $2^{\aleph_0}$  values in $G$, the verbal subgroup $w(G)$ is finite. 

Using the notion of polynomial mappings introduced by Passi, we establish that every group word is strongly concise in the class of metabelian profinite groups. With this new approach, we also give an alternative proof for the fact that every group word is strongly concise in the class of profinite nilpotent groups, originally due to Detomi, Klopsch, and Shumyatsky. 
\end{abstract}

\maketitle


\section{Introduction}

A \emph{group word} $w$ in $k$ variables is an element of the free group $F_k$, and substituting variables for elements of a given group $G$, defines a map 
$w \colon G^k \rightarrow G$. Verbal problems study the relation between the image of the map, the so-called set of \emph{$w$-values}
\[ w\{G \}= \{ w(g_1, \dots, g_k) \mid g_i \in G \} \, , \]
and the \emph{verbal subgroup} $w(G) $ of $ G$ generated by \(w\{G\}\).  (In the context of topological groups, $w(G)$ denotes the \emph{closed} subgroup generated by $w\{G\}$.)

The word $w$ is said to be \emph{concise} in the class of groups $\Ca$, if for every $G$ in $\Ca$, $w(G)$ is finite whenever $w\{G\}$ is finite. For instance, a classical theorem of Schur \cite{Schur} says that the commutator word $\gamma_2(x, y) = [x, y]$ is concise in the class of all groups, and many other families of words are known to be concise. This conclusion fails for words in general: there are words and groups where $w\{G\}$ is finite, but, nevertheless, $w(G)$ is infinite, see \cite{Ivanov}. 

The property of conciseness has been generalised in several directions, we refer to \cite{Martina, semiconciseness, bounded, A8}. One such generalisation is that of \emph{strong conciseness} in the setting of profinite groups, introduced in \cite{DMS2} and subsequently extended in \cite{DKS, DMS, Iker,  A7, KS}. A word $w$ is strongly concise in a class of profinite groups $\Ca$, if for every $G$ in $\Ca$,  $w(G)$ is finite whenever $|w\{G\}|< 2^{\aleph_0}$. As a strengthened version of the classical conjecture that every word is concise in the class of profinite groups, see \cite{words}, it is conjectured that every word is actually strongly concise in the class of profinite groups, see \cite{DKS}.

The so-called \emph{strong conciseness conjecture} has been confirmed for the classes of virtually nilpotent profinite groups \cite{Detomi, DKS}, analytic groups over general pro-$p$ domains \cite{A2}, or linear profinite groups \cite{A7}. In most of these cases, one shows that $|w\{G\}|< 2^{\aleph_0}$ already forces $w\{G\}$ to be finite, so the result follows from the known conciseness of linear groups \cite{merz}, general analytic groups~\cite{A2}, or virtually abelian-by-nilpotent groups~\cite{TS}. Notwithstanding, there is yet another class of \textit{abstract} groups where every word is concise, but the strong version of the problem is still open: by a theorem of Turner-Smith \cite{TS}, every word is concise in an abelian-by-nilpotent group, but strong conciseness is unknown for their profinite counterparts, compare with \cite[Question~1.7]{A7}. 

\medskip

In this manuscript, we present the following case:

\begin{theorem}\label{thm: metabelian}
Every word is strongly concise in the class of profinite meta\-be\-lian
groups.
\end{theorem}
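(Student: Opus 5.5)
The plan is to reduce to the case where $w\{G\}$ is finite, and then invoke Turner-Smith's theorem \cite{TS}: every word is concise in abelian-by-nilpotent, in particular metabelian, groups. So let $G$ be profinite metabelian with $|w\{G\}|<2^{\aleph_0}$. I will prove directly that $w\{G\}$ is finite. Then the abstract subgroup generated by $w\{G\}$ is finite, hence closed, and $w(G)$ is finite.

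First I reduce $w$. Modulo $F''$, every word is equivalent, as a map on metabelian groups, to a word of the form $x_1^{e_1}\cdots x_k^{e_k}\,c$ with $c\in F_k'$. If $d=\gcd(e_i)\neq 0$, then $w$ takes all $d$-th powers among its values, after specialising variables. So $\{g^d : g\in G\}$ has fewer than $2^{\aleph_0}$ elements. A standard Baire category argument (as in \cite{DKS}) then shows $G^d$ is finite, and I pass to the open subgroup where $w$ behaves like a commutator word. I therefore expect the main case to be $w\in F_k'$.

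In that case $w(g_1,\dots,g_k)$ lies in the abelian normal subgroup $A=G'$. I intend to use Passi's polynomial maps. Fix a coset decomposition and write $g_i=t_ia_i$ with $a_i\in A$. The map $(a_1,\dots,a_k)\mapsto w(t_1a_1,\dots,t_ka_k)$ is then a polynomial map $A^k\to A$ of bounded degree. Its degree is governed by the action of $\Z[G/A]$, which factors through the augmentation ideal in the metabelian Fox-calculus expansion. The key lemma I would establish is the following: a continuous polynomial map from a profinite abelian group to a profinite abelian group with fewer than $2^{\aleph_0}$ values has finite image. I would prove it by induction on degree. Differences $\Delta_b f(a)=f(a+b)-f(a)$ have lower degree. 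A countable-image continuous map on a profinite space is constant on some open set, by Baire category. Combining these, each finite-difference iterate is eventually locally constant, and at top degree it is a homomorphism with countable image, hence finite image.

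It remains to assemble the fibres. For fixed $(t_1,\dots,t_k)$ the image is finite, but there are continuum many cosets. Here I would use that $G/A$ is abelian. Vary $t_i$ within a basic open set and use the same Baire argument on the whole map $G^k\to A$. This should show that $w$ is constant, up to finitely many values, on an open subgroup $U^k$ of finite index. Translating across the finitely many cosets of $U^k$ then gives that $w\{G\}$ is finite. The main obstacle is proving that the top-degree multilinear part is continuous, and that a continuum-sized family of values cannot accumulate. I would first try to show that the Passi degree is uniformly bounded in terms of $w$ alone, so that induction on degree works uniformly in the $t_i$.
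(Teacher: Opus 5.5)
Your outer frame matches the paper: Turner-Smith at the end, a reduction to commutator words, and a Baire-type lemma saying that a continuous polynomial map with fewer than $2^{\aleph_0}$ values has finite image. The core step, however, is missing.

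\textbf{The gap.} For $w\in F_k'$ and fixed $\mathbf t$, the map $\mathbf a\mapsto w(t_1a_1,\dots,t_ka_k)$ is just the translate by $w(\mathbf t)$ of a continuous homomorphism $A^k\to A$, since $A=G'$ is abelian and normal. So finiteness of each fibre image is elementary, and the polynomial lemma adds nothing there. The whole difficulty is the union of these finite sets over the continuum many $\mathbf t\in(G/G')^k$, which a priori can be infinite. Your ``assemble the fibres'' step does not control this union. Baire applied to $G^k\to A$ only gives constancy on a single coset $\mathbf xU^k$. Transporting that information to the other cosets is exactly what polynomiality provides, since the top-order difference does not depend on the base point.

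Your proposed remedy does not repair this. A degree bound in the $A$-direction is trivially uniform (degree $1$) and says nothing about the union over $\mathbf t$. A degree bound for the full word map $G^k\to A$ is false. In $\langle x,y,z\mid [x,y]=1,\ x^z=y,\ y^z=xy\rangle$ one has $[y,z^n]=x^{F_n}y^{F_{n+1}-1}$ with Fibonacci exponents (Example~\ref{ex: metabelian pol}). So even $[x_1,x_2]$ is not polynomial on metabelian groups.

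\textbf{The missing idea.} You need to make $w$ central-valued first, and only then prove polynomiality. The paper does this through the split group $S=G'\rtimes G/G'$, which still satisfies $|w\{S\}|<2^{\aleph_0}$. For products $AB$ of abelian profinite groups it shows $w\{S\}$ is finite in two steps.
\begin{enumerate}
\item The map $\mathbf b\mapsto w(\mathbf a\mathbf b)w(\mathbf b)^{-1}$, a sum of orbit points $\sum_j a_{i_j}\cdot h_j$ over a closed $H\le B^{\ell}$, has finite image. This is proved by induction on $\ell$ using stabilisers.
\item The family $\{w_{\mathbf b}\}$ is compact in the compact-open topology, and a Baire argument then shows that only finitely many homomorphisms $w_{\mathbf b}$ occur.
\end{enumerate}
This gives that $G'$ is finite-by-marginal for $w$, hence $[w(G),G]$ is finite, and modulo it $w$ is $Z(G)$-valued.

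Next, split $w$ into Boolean cross-effects $w_S$, each reduced in its variables. Use twisted differences $\nabla_{r,h}F(g)=F(gh)-c_h^rF(g)$, which coincide with $\Delta_h$ on central-valued maps. This shows $w\colon G^k\to Z(G)$ is polynomial, and only at that point does the Baire lemma apply on $G^k$.

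\textbf{Smaller points.} ``$G^d$ finite'' is not a direct Baire consequence, but it does follow once commutator words are settled. The commutator word $x^dy^d(xy)^{-d}$ takes fewer than $2^{\aleph_0}$ values, so its verbal subgroup is finite. Modulo that subgroup, $g\mapsto g^d$ is a continuous homomorphism, so its image is finite. Also, the hypothesis throughout is fewer than $2^{\aleph_0}$ values, not countably many.
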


Although it is straightforward that every word is strongly concise in the class of abelian groups (see Lemma~\ref{lem: abelian}), strong conciseness does not behave well under group extensions, and the considerations that enter into its proof are new.  Indeed, the main idea is proving that when a word map between profinite spaces is polynomial in the sense of Passi (see Section \ref{sec: polynomial} for the precise definition), then its image is either finite or at least continuum sized, see Proposition~\ref{prop: polynomial strongly concise}.

This approach differs from the previous study of word problems in the class of abelian-by-nilpotent groups ({\it e.g.} \cite{Stroud} or \cite[Theorem 1.4]{A7}), which rely heavily on the fact that for a finitely generated nilpotent group $N$, the group ring $\Z[N]$ is Noetherian. This framework, however, is not available in general.

\medskip

It can be easily concluded from \cite{Leibman} that word maps in nilpotent groups are polynomial, which, as a by-product, gives an alternative proof for the strong conciseness of words in the class of nilpotent profinite groups, due to Detomi, Klopsch and Shumyatsky.

\begin{theorem}[\textup{\cite[Theorem 1.2]{DKS}}] \label{thm: nilpotent}
Every word is strongly concise in the class of profinite nilpotent groups. 
\end{theorem}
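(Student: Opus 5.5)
Let $G$ be a profinite nilpotent group of class $c$ and assume $|w\{G\}|<2^{\aleph_0}$. The plan has three steps: first show that $w\{G\}$ is in fact finite, then apply the known conciseness of words in nilpotent (indeed abelian-by-nilpotent) groups \cite{TS}, and finally pass from the abstract to the closed verbal subgroup. For the last step: once $w\{G\}$ is finite, the abstract subgroup it generates is finite by conciseness, hence closed, and so it coincides with $w(G)$.

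To prove finiteness of $w\{G\}$, I will use the polynomial-map machinery of Section~\ref{sec: polynomial}, in particular Proposition~\ref{prop: polynomial strongly concise}. That proposition says that a continuous polynomial map (in Passi's sense) from a profinite space/group to a profinite group has image either finite or of size at least $2^{\aleph_0}$. It therefore suffices to check that the word map $w\colon G^k\to G$ satisfies its hypotheses. By Leibman \cite{Leibman}, any map $G^k\to G$ of the form $(g_1,\dots,g_k)\mapsto w(g_1,\dots,g_k)$, built from coordinate maps by products and inverses, is polynomial of degree bounded in terms of $c$ and $w$ whenever $G$ is nilpotent. The reason is that the coordinate projections are homomorphisms, hence polynomial of degree $1$, and polynomial maps into a nilpotent group of class $c$ form a group under pointwise multiplication. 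Continuity of $w$ is automatic. The conclusion is that $w\{G\}$ is finite.

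The main obstacle is matching the exact form of Proposition~\ref{prop: polynomial strongly concise}. If it is stated for maps into an abelian group, I will induct along the lower central series instead. Set $\bar G=G/\gamma_c(G)$. By induction on $c$, the set $w\{\bar G\}$ is finite, and the base case is Lemma~\ref{lem: abelian}. Let $N$ be the subgroup corresponding to the finite-index open subgroup on which things stabilise; concretely, partition $G^k$ into finitely many closed pieces according to the value of $w$ modulo $\gamma_c(G)$, and refine so that each piece is a coset of an open subgroup $H^k$. On such a coset $x H^k$ the map $h\mapsto w(x h)\,w(x)^{-1}$ takes values in the abelian group $\gamma_c(G)$. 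Leibman's result then still gives that this map is polynomial into an abelian profinite group, so Proposition~\ref{prop: polynomial strongly concise} applies on each piece. A finite union of sets, each either finite or of continuum size, is finite if it has fewer than $2^{\aleph_0}$ elements, which gives finiteness of $w\{G\}$.

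The delicate technical point in this fallback is making the open subgroup $H$ exist. I would obtain it from continuity of $w$ composed with the projection to a finite quotient, noting that the fibers of a continuous map to a finite discrete set are clopen and hence finite unions of cosets of an open normal subgroup. The verification that the restricted map is polynomial follows from the closure of polynomial maps under translation and multiplication.
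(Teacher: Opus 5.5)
Your proposal is correct and is essentially the paper's proof: Leibman's theorem that polynomial maps into a nilpotent group are closed under pointwise products and inverses makes $w\colon G^k\to G$ polynomial, Proposition~\ref{prop: polynomial strongly concise} then forces $w\{G\}$ to be finite, and Turner-Smith gives finiteness of $w(G)$. Your remark that the finite abstract verbal subgroup is automatically closed is a detail the paper leaves implicit, and since Proposition~\ref{prop: polynomial strongly concise} is stated for an arbitrary profinite target group, your fallback via $\gamma_c(G)$ is unnecessary.
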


However, for metabelian groups, not all word maps are polynomial, as we show in Example~\ref{ex: metabelian pol}, but they are for commutator words whose image is central, see Theorem~\ref{thm: polynomial}. This is enough for proving Theorem~\ref{thm: metabelian}.

\subsection{Reader's guide} We start by proving, in Section~\ref{sec: split}, without relying on polynomial maps, that every word is strongly concise in profinite groups $G$ that are the product of two abelian groups; in particular, in split extensions of two abelian groups. In Section~\ref{sec: marginal}, we use the split case to prove a reduction to central-valued word maps. Section~\ref{sec: polynomial} is devoted to polynomial maps, and proving that a central-valued commutator word map in a metabelian group is indeed polynomial. We also reprove Theorem~\ref{thm: nilpotent} in the same section. Finally, we prove Theorem~\ref{thm: metabelian} in Section~\ref{sec: final}. 

\section{Products of two abelian groups} \label{sec: split}

In this section, we consider profinite groups $G$ that are a product $AB$ of two profinite abelian groups such that $A \unlhd G$. These are metabelian by a classical result of It\^{o} \cite{Ito}. (This factorisation is detected in the finite levels of the inverse system \cite{her}.) 

 \smallskip
 
Fix a $k$-variant word $w$ 
$$w(x_1,\dots,x_k) = \prod_{j=1}^\ell x_{i_j}^{\epsilon_j} \, ,$$
with $i_j \in \{1, \dots , k \}$ and $\epsilon_j \in \{\pm 1 \}$. The integer $\ell$ is called the \emph{length} of the word $w$, denoted by $\len(w)$.   We must show that $w\{G \}$ is finite, $|w\{G\}|<2^{\aleph_0}$, so that the verbal subgroup $w(G)$ is then also finite, by virtue of the Theorem of Turner-Smith \cite{TS}. That is precisely what is done in Theorem~\ref{thm: split}.

\smallskip

\subsection{Abelian profinite groups} 

On the one hand, every word is strongly concise in abelian profinite group.

\begin{lemma} \label{lem: abelian}
Every word is strongly concise in the class of profinite abelian groups.
\end{lemma}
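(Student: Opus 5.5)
In an abelian profinite group $G$ (written additively), the word $w$ reduces modulo the commutator subgroup of $F_k$ to $x_1^{e_1}\cdots x_k^{e_k}$, where $e_i$ is the exponent sum of $x_i$ in $w$. The word map is therefore $(g_1,\dots,g_k)\mapsto \sum_i e_i g_i$. Its image is the subgroup $eG=\{eg \mid g\in G\}$, where $e=\gcd(e_1,\dots,e_k)$. Indeed, choose integers $c_i$ with $\sum c_i e_i = e$ and take $g_i = c_i g$. Conversely, each $\sum e_i g_i$ lies in $eG$, since $e$ divides every $e_i$. In particular $w\{G\}=w(G)=eG$ is a closed subgroup, being the image of the compact group $G$ under the continuous endomorphism $g\mapsto eg$. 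If $e=0$, then $w\{G\}$ is trivial and there is nothing to prove.

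So the statement reduces to the following claim: an infinite profinite (abelian) group $H$ has cardinality at least $2^{\aleph_0}$. Applied to $H=eG$, this shows that $|w\{G\}|<2^{\aleph_0}$ forces $w(G)=eG$ to be finite.

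For the claim, let $H$ be infinite profinite. Then $H$ has a strictly descending chain of open normal subgroups $H=U_0>U_1>U_2>\cdots$. To see this, suppose $U_n$ has been chosen open of finite index. Since $H$ is infinite, $U_n\neq 1$, so some open normal subgroup of $H$ omits a nontrivial element of $U_n$; intersecting it with $U_n$ gives $U_{n+1}$. Now build a binary tree of cosets. For each sequence $s\in\{0,1\}^{\mathbb N}$, choose cosets $C_{s|n}$ of $U_{n}$ that are nested, with the two children of each node distinct; this is possible because each $U_n$ contains at least two cosets of $U_{n+1}$. Each nested sequence of nonempty closed sets has nonempty intersection by compactness, and distinct $s$ give disjoint intersections. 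Hence $|H|\ge 2^{\aleph_0}$.

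The only genuinely delicate point is the first step: checking that the image of the word map really is the full subgroup $eG$, and is closed. This is what lets the cardinality dichotomy for profinite groups apply directly to $w\{G\}$ rather than to the possibly larger $w(G)$. The Bézout argument above handles this, and closedness follows from compactness.
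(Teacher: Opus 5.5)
Your proof is correct and follows the paper's route: $w\{G\}=w(G)$ is a closed subgroup of $G$, hence a profinite group, hence either finite or of cardinality at least $2^{\aleph_0}$. The paper simply asserts these facts, while you verify them explicitly via the B\'ezout computation and the binary tree of cosets; the first step is less delicate than you suggest, since on an abelian group the word map is a continuous homomorphism $G^k\to G$, so its image is automatically a closed subgroup.
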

\begin{proof}
When $G$ is abelian, $w\{G\}=w(G)$, and, since $w\{G\}$ is closed, then $w\{G\}$ is itself a profinite group. In particular, $w\{G\}$ is either finite or at least continuum sized. 
\end{proof}
Applied to $B$, the inequality
\[|w\{B\}|\le |w\{G\}|<2^{\aleph_0} \]
implies that $w(B)$ is finite.

\smallskip

\subsection{Generalised words} Once we fix a tuple $\b = (b_1, \dots, b_k) \in B^k$, we can define a so-called \emph{generalised word map} 
\begin{equation} \label{eq: generalised}
w_\b \colon A^k \rightarrow A, \qquad (a_1, \dots, a_k) \mapsto w(a_1 b_1, \dots, a_k b_k) w(\b)^{-1} \, .
\end{equation}
Observe that 
\[ w_{\mathbf b} \colon A^k \rightarrow A, \qquad
(a_1,\dots,a_k)\mapsto \prod_{j=1}^{\len(w)} a_{i_j}^{\, \nu_j(\mathbf b)} \, ,\]
where $\nu_j(x_1, \dots, x_k)$ are fixed words depending on $w$; compare with \cite[Equation 2.1]{words}. In particular, 
\begin{equation}
\label{eq: expression of words}
w\{G \} \subseteq \bigcup_{\b \in B^k} w_\b\{A\} \cdot w\{B\} \, 
\end{equation}
where $w_\b\{ A \}$ is the image of the generalised word map $w_\b$.

Since $A$ is abelian, each $w_{\b}$ is a continuous homomorphism of abelian profinite groups. In particular, the generalised verbal subgroup $w_{\b}(A) = \ima{w_\b}$ is a closed subgroup of $A$, {\it i.e.} a profinite group. Moreover,
\[ |w_\b(A) | \leq |w\{G\}|^2 < 2^{\aleph_0}\, .\]
Since every profinite group is either finite or has order at least the continuum, then $\ima(w_\b)$ is finite. However, it might happen that the union 
\[ \bigcup_{\b \in B^k} w_{\b}(A)\]
is infinite, even though each $w_\b(A)$ is finite.

\medskip

We may also fix a tuple $\a \in A^k$ and will study the image of the map 
\[ w_\a \colon B^k \rightarrow A, \quad \b \mapsto w(\a \b) w(\b)^{-1} \, . \]

Note that $w_{\a}$ is not a generalised word. But using the additive notation, from \eqref{eq: expression of words}, we can recover the expression
\begin{equation}
\label{eq: wa}
w_{\a}(\b) = \sum_{j=1}^{\len(w)} a_{i_j} \cdot \nu_j(\b) \, .
\end{equation}

Hence, set $\ell = \len(w)$,  we have the continuous map
$$\Lambda \colon B^k \rightarrow B^\ell, \, \quad \b \mapsto (\nu_1(\b), \dots, \nu_\ell(\b)),$$
whose image $\Lambda(B^k)$ is a closed subgroup of $B^\ell$. Indeed, component-wise $\Lambda(B^k)$ is a verbal subgroup of the abelian group $B$, and so it follows from Lemma~\ref{lem: abelian}.

\medskip

We collect this reasoning in the following lemma:

\begin{lemma}
\label{lem: image}
Let $\ell = \len(w)$. There exists a closed subgroup $H \leq B^\ell$ such that
\[ w_{\a}(B^k) = \left\{ \sum_{j=1}^\ell a_{i_j} \cdot h_j  \mid (h_1, \dots, h_\ell) \in H \right\}.\]
\end{lemma}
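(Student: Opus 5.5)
The plan is to take $H=\Lambda(B^k)$ and check two things: that $H$ is a closed subgroup of $B^\ell$, and that the displayed set is exactly the image of $w_{\a}$. The second is immediate from \eqref{eq: wa}. That formula says $w_{\a}(\b)=\sum_j a_{i_j}\cdot \nu_j(\b)$, where $A$ is written additively and carries the conjugation action of $B$. So $w_{\a}(\b)$ is obtained by applying the fixed map $(h_1,\dots,h_\ell)\mapsto \sum_j a_{i_j}\cdot h_j$ to $\Lambda(\b)$. Hence $w_{\a}(B^k)$ is the image of $\Lambda(B^k)=H$ under this map, which is the stated set.

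Before using \eqref{eq: wa}, I will justify it briefly. Write each $a_ib_i$ and expand $w(a_1b_1,\dots,a_kb_k)$ by moving the $A$-factors to the left. Since $A\unlhd G$, each letter $x_{i_j}^{\epsilon_j}$ contributes one $A$-factor conjugated by an element of $B$. That element is a word in the $b_i$ determined only by the position $j$ in $w$; when $\epsilon_j=-1$ the factor is $a_{i_j}^{-1}$, and the sign can be absorbed into the coefficient. Because $A$ is abelian, these factors commute. What remains after removing them is $w(\b)$, which gives \eqref{eq: generalised} and its additive form \eqref{eq: wa}. Here $\nu_j$ is a fixed word, read as an element of $\Z[B]$ acting on $A$, possibly with a sign.

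It remains to show $H$ is a closed subgroup of $B^\ell$. Each $\nu_j$ is a word in $x_1,\dots,x_k$. Since $B$ is abelian, evaluating at $\b$ is a continuous homomorphism $B^k\to B$, given by $\b\mapsto \prod_i b_i^{e_{ij}}$ where $e_{ij}$ is the exponent sum of $x_i$ in $\nu_j$. Therefore $\Lambda$ is a continuous homomorphism $B^k\to B^\ell$. Its image is a subgroup, and it is compact as the continuous image of a compact space, hence closed in the Hausdorff group $B^\ell$. Equivalently, $\Lambda$ is a word map in the abelian group $B$, namely the multi-output word $(\nu_1,\dots,\nu_\ell)$, and the argument of Lemma~\ref{lem: abelian} applies.

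The one point needing care is the sign convention. If $\epsilon_j=-1$, the contribution is $-(a_{i_j}\cdot \nu_j(\b))$, not $a_{i_j}\cdot\nu_j(\b)$. I will fold this into the statement by allowing signed coefficients: either redefine the $j$-th term as $\epsilon_j\, a_{i_j}\cdot h_j$, or replace $a_{i_j}$ by $\epsilon_j a_{i_j}$. Neither change affects the conclusion that the image is parametrised by a closed subgroup $H\le B^\ell$. There is no real obstacle; the lemma is bookkeeping once the expansion \eqref{eq: wa} is written down correctly.
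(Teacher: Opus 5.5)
Your proposal is correct and follows the paper's route: take $H=\Lambda(B^k)$, which is closed because $\Lambda$ is a continuous homomorphism on the compact group $B^k$ (the paper phrases this via Lemma~\ref{lem: abelian}), and read the image off from \eqref{eq: wa}. Your sign remark is also right. As printed, \eqref{eq: wa}, and hence the lemma, omit the $\epsilon_j$: for $w=x_1^{-1}$, $B=1$ and $2a_1\neq 0$ one gets $w_{\a}(B)=\{-a_1\}\neq\{a_1\}$. Replacing $a_{i_j}$ by $\epsilon_j a_{i_j}$ is exactly the harmless fix, since the subsequent proposition is stated for arbitrary tuples in $A^m$.
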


Next we study the possible cardinalities of these sets. For a profinite group $B$ acting continuously on an abelian profinite group $A$, the \emph{stabiliser} of $S \subseteq A$ in $H \leq B$, is
\[ \operatorname{Stab}_H(S) = \{ h \in H \mid a \cdot h = a, \quad \forall a \in S \}.\]
\begin{lemma}
Let $H$ be an abelian profinite group acting continuously on an abelian profinite group $A$, and let $a \in A$. The stabiliser $\operatorname{Stab}_H(a)$ is a closed normal subgroup of $H$. 
\end{lemma}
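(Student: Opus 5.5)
We prove the two properties separately, working throughout with the given continuous action $A\times H\to A$, $(a,h)\mapsto a\cdot h$.

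\emph{Subgroup.} Since the action is by automorphisms, $a\cdot 1=a$, and $a\cdot(hh')=(a\cdot h)\cdot h'$. Hence if $h,h'$ fix $a$, so does $hh'$. Applying $h^{-1}$ to both sides of $a\cdot h=a$ gives $a=a\cdot h^{-1}$, so $\Stab_H(a)$ is closed under inverses. It is therefore a subgroup of $H$.

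\emph{Normality.} Because $H$ is abelian, every subgroup of $H$ is normal, so nothing further is needed. (Even without commutativity one could not expect normality, so this hypothesis is exactly what is used here.)

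\emph{Closedness.} The idea is to write the stabiliser as the preimage of a closed set under a continuous map. Fix $a$ and consider $\varphi_a\colon H\to A$, $h\mapsto a\cdot h - a$, using additive notation in $A$. This map is continuous: it is the restriction of the continuous action to $\{a\}\times H$, composed with translation by $-a$, and translations are continuous in the topological group $A$. Since $A$ is Hausdorff, $\{0\}$ is closed, so
\[ \Stab_H(a)=\varphi_a^{-1}(0) \]
is closed in $H$. Alternatively, $\Stab_H(a)$ is the equaliser of the two continuous maps $h\mapsto a\cdot h$ and $h\mapsto a$ into the Hausdorff space $A$, and equalisers into Hausdorff spaces are closed.

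The only real point is that the orbit map $h\mapsto a\cdot h$ is continuous. This follows directly from the definition of a continuous action, namely joint continuity of $A\times H\to A$. If only separate continuity in $h$ is assumed, that already suffices for the argument. For an arbitrary subset $S\subseteq A$,
\[ \Stab_H(S)=\bigcap_{a\in S}\Stab_H(a) \]
is an intersection of closed normal subgroups, so the same conclusion holds for $S$.
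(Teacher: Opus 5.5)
Your proof is correct and follows the paper's argument essentially verbatim: closedness comes from writing the stabiliser as the preimage of a point under the continuous orbit map $h\mapsto a\cdot h$ (your $h\mapsto a\cdot h-a$ with the point $0$ is the same thing), closure under products and inverses comes from the action axioms, and normality comes from $H$ being abelian. One wording point: $a\cdot 1=a$ is an axiom of the action itself, not a consequence of acting by automorphisms.
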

\begin{proof}
Since the action is continuous, the orbit map $ \theta \colon  H \rightarrow A$, $h \mapsto a \cdot h$ is continuous, and hence $\Stab_H(a) = \theta^{-1}(a)$ is a closed subset of $H$. 
\smallskip

It remains to prove that $\Stab_H(a)$ is a subgroup. Let $h_1, \, h_2 \in \Stab_H(a)$. Then,
\[ a \cdot (h_1h_2)=(a\cdot h_1)\cdot h_2=a\cdot h_2=a, \]
so $h_1h_2\in \Stab_H(a)$. Also, if $h\in \Stab_H(a)$, then
\[ a= a\cdot 1=a\cdot (hh^{-1})=(a\cdot h)\cdot h^{-1}=a \cdot h^{-1}, \]
so $h^{-1}\in \Stab_H(a)$. Thus $\Stab_H(a)$ is a subgroup of $H$. Finally, since $H$ is abelian, all its subgroups are normal.
\end{proof}

We will also use the following result.

\begin{proposition}[\textup{\cite[Proposition 2.1]{DKS}}] \label{prop: DKS}
 Let $\phi \colon X \rightarrow Y$  be a continuous map between non-empty Stone spaces. Suppose that $|\phi(X)| < 2^{\aleph_0}$. Then, there exists a non-empty open $U \subseteq X$ where $\phi$ is constant.
\end{proposition}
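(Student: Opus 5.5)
Since this proposition is quoted from \cite[Proposition 2.1]{DKS}, we may assume it. If an argument is wanted, the plan is a Cantor-scheme construction by contradiction: we show that a continuous map which is not locally constant anywhere on a non-empty Stone space has image of cardinality at least $2^{\aleph_0}$.

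Suppose $\phi$ is not constant on any non-empty open subset of $X$. The key step is a splitting lemma. Let $U \subseteq X$ be non-empty clopen. Since $\phi$ is not constant on $U$, there are $x, x' \in U$ with $\phi(x) \neq \phi(x')$. Because $Y$ is Hausdorff and zero-dimensional, we can pick disjoint clopen sets $V, V' \subseteq Y$ containing $\phi(x)$ and $\phi(x')$ respectively. Then $U_0 = U \cap \phi^{-1}(V)$ and $U_1 = U \cap \phi^{-1}(V')$ are non-empty clopen subsets of $U$. Moreover, $\phi(U_0)$ and $\phi(U_1)$ lie in the disjoint sets $V$ and $V'$, so in particular $\overline{\phi(U_0)} \cap \overline{\phi(U_1)} = \emptyset$, since $V, V'$ are closed.

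Iterating this lemma starting from $U_\emptyset = X$ produces, for each finite binary string $s$, a non-empty clopen set $U_s$ with $U_{s0}, U_{s1} \subseteq U_s$. For each $s$ we also have disjoint clopen sets $V_{s0}, V_{s1} \subseteq Y$ with $\phi(U_{si}) \subseteq V_{si}$. Now fix an infinite binary sequence $\sigma \in \{0,1\}^{\N}$. The sets $U_{\sigma|n}$ form a decreasing chain of non-empty closed subsets of the compact space $X$, so their intersection is non-empty; choose a point $x_\sigma$ in it.

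Given $\sigma \neq \tau$, let $s$ be their longest common prefix, so that, say, $\sigma$ extends $s0$ and $\tau$ extends $s1$. Then $\phi(x_\sigma) \in V_{s0}$ and $\phi(x_\tau) \in V_{s1}$, and these sets are disjoint, so $\phi(x_\sigma) \neq \phi(x_\tau)$. Hence $\sigma \mapsto \phi(x_\sigma)$ is injective from $\{0,1\}^{\N}$ into $\phi(X)$, giving $|\phi(X)| \geq 2^{\aleph_0}$, a contradiction. Therefore some non-empty open $U$ exists on which $\phi$ is constant.

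The main point needing care is the splitting step. It uses that $Y$ is a Stone space, so that two distinct points can be separated by clopen sets, and that $X$ has a basis of clopen sets, so that the construction stays inside clopen sets. Compactness of $X$ is used only to make the nested intersections non-empty. Neither the axiom of choice beyond countable dependent choices nor any metrisability assumption is needed.
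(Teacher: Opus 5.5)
Your argument is correct. The paper itself gives no proof of this statement; it imports it from \cite[Proposition~2.1]{DKS}. What you offer is therefore a self-contained replacement rather than a variant of an argument in the text.

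The shortest standard route works in the image rather than in $X$. The set $\phi(X)$ is a non-empty compact Hausdorff space, and such a space without isolated points has at least $2^{\aleph_0}$ points. So $|\phi(X)|<2^{\aleph_0}$ yields an isolated point $y\in\phi(X)$, and then $\phi^{-1}(y)$ is a non-empty open subset of $X$ on which $\phi$ is constant. Your Cantor scheme is essentially the standard proof of that perfect-set cardinality fact, carried out upstairs in $X$ by pulling back clopen separations from $Y$. What it buys is self-containedness. The image route is shorter and needs only that $Y$ be Hausdorff, whereas yours uses clopen separation of points in $Y$. Neither route uses anything about $X$ beyond compactness.

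Two remarks in your closing paragraph are slightly off. First, the sets $U_s$ are clopen automatically, being finite intersections of preimages of clopen subsets of $Y$ under the continuous map $\phi$. So you never use that $X$ has a basis of clopen sets. Second, choosing a point $x_\sigma$ simultaneously for all $2^{\aleph_0}$ branches $\sigma$ is not an instance of dependent choice, so your claim about how little choice is needed is overstated. This is of course harmless in ZFC.
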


\begin{proposition}
Let $A$ be an abelian profinite group, let $B$ be an abelian profinite group acting continuously on $A$, and let $H \leq B^m$ be a closed subgroup. For a fixed tuple $\mathbf a = (a_1, \dots, a_m) \in A^m$, define
\[
F_{\mathbf a}\colon H \to A,\qquad (h_1,\dots,h_m)\mapsto \sum_{j=1}^m a_j\cdot h_j.
\]
Then either $F_{\mathbf a}(H)$ is finite, or $|F_{\mathbf a}(H)|\ge 2^{\aleph_0}$.
\end{proposition}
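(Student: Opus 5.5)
The proof strategy is to combine Proposition~\ref{prop: DKS} with the fact that $B$ is abelian, which makes $F_{\mathbf a}$ transform well under translation. Assume $|F_{\mathbf a}(H)|<2^{\aleph_0}$; the goal is to show $F_{\mathbf a}(H)$ is finite. Since $F_{\mathbf a}$ is continuous on the profinite space $H$, Proposition~\ref{prop: DKS} gives a non-empty open set on which $F_{\mathbf a}$ is constant. That set contains a coset $hN$ of some open subgroup $N\le H$.

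\emph{Step 1: reduce to $h=1$.} Put $x_j=a_j\cdot h_j$ and $\mathbf x=(x_1,\dots,x_m)$. Because $B$ acts on the left argument and is abelian,
\[ F_{\mathbf a}(hy)=\sum_j a_j\cdot(h_jy_j)=\sum_j x_j\cdot y_j=F_{\mathbf x}(y). \]
So $F_{\mathbf a}(H)=F_{\mathbf x}(H)$, and we may assume $F_{\mathbf a}$ is constant on $N$ itself.

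\emph{Step 2: the translation identity.} For $y\in H$ and $n\in N$, commutativity of $B$ gives
\[ F_{\mathbf a}(yn)-F_{\mathbf a}(y)=\sum_j \bigl(a_j\cdot(n_j-1)\bigr)\cdot y_j. \]
Here $a\cdot(n-1)$ abbreviates $a\cdot n-a$. Consider the ``period group''
\[ T=\{t\in H : F_{\mathbf a}(yt)=F_{\mathbf a}(y)\ \text{for all } y\in H\}. \]
As an intersection of preimages under continuous maps, $T$ is closed. It is a subgroup: $F(ytt')=F(yt)=F(y)$, and closure under inverses follows by substituting $yt^{-1}$. In particular $T$ contains the closed subgroup $S=\bigcap_j\{t:\ a_j\cdot t_j=a_j\}$, since the displayed difference vanishes coordinatewise for $t\in S$. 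Then $F_{\mathbf a}$ factors through the finite-or-continuum profinite space $H/T$. The proposition therefore reduces to showing that $T$ is open whenever $F_{\mathbf a}$ is constant on an open subgroup. Once $T$ is open, $H/T$ is finite and so is the image.

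\emph{Step 3 (main obstacle): openness of $T$.} The difficulty is the cross terms in the identity of Step~2. From the choice of $N$ we only know $\sum_j a_j\cdot(n_j-1)=0$, not that each summand vanishes. Moreover, after twisting by $y_j$, the different coordinates are acted on by different elements.

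I would attack this by induction on $m$. The case $m=1$ is immediate: there $T=\Stab_H(a_1)$, so $F_{\mathbf a}(H)\cong H/\Stab_H(a_1)$, using that stabilisers are closed subgroups (previous lemma). For the inductive step, I would apply Proposition~\ref{prop: DKS} not only to $F_{\mathbf a}$ but also to the auxiliary maps
\[ y\mapsto \sum_j \bigl(a_j\cdot(n_j-1)\bigr)\cdot y_j, \qquad n\in N, \]
each of which again has the shape of some $F_{\mathbf a'}$ and has image of size $<2^{\aleph_0}$. The aim is to shrink $N$ until the coordinates decouple, i.e.\ until $N\le T$. If this direct route fails, the fallback is to pass to $A$ regarded as a module over the completed group algebra of $B$. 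There I would argue via the closed submodule generated by the $a_j\cdot(n_j-1)$, showing it is finite so that some open subgroup acts trivially on it.
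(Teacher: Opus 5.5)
Your Steps 1 and 2 are correct, but Step 2 makes no progress on its own. By continuity of $F_{\mathbf a}$, the group $T$ is open if and only if $F_{\mathbf a}(H)$ is finite. So the whole content lies in Step 3, and there you give a plan rather than a proof, and the plan as sketched does not close.

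Your auxiliary maps are $y\mapsto F_{\mathbf a}(yn)-F_{\mathbf a}(y)=F_{\mathbf a'}(y)$ with $a'_j=a_j\cdot n_j-a_j$. They still involve all $m$ coordinates, so the induction hypothesis on $m$ is never actually invoked. Applying Proposition~\ref{prop: DKS} to each of them gives an open set depending on $n$; there are uncountably many $n\in N$ and nothing makes these choices uniform.

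Moreover, decoupling in the sense that the individual summands $a_j\cdot(n_j-1)$ vanish cannot be achieved in general, and your fallback is false as stated. Take $A=\Z_p$, $B=1+p\Z_p$ acting by multiplication, $m=2$, $H=\{(b,b)\mid b\in B\}$ and $\mathbf a=(1,-1)$. Then $F_{\mathbf a}\equiv 0$, so $T=H$. But for every open subgroup $N\le H$, the elements $a_1\cdot(n_1-1)=b-1$ with $(b,b)\in N$ are infinitely many. Also no $b\neq 1$ in $B$ fixes a nonzero element of $A$. So the submodule they generate is infinite and no open subgroup of $B$ acts trivially on it.

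The missing idea is to fix the coset and let $n$ vary, rather than fixing $n$ and varying $y$. After your Step 1 you have $\sum_{j=1}^m(a_j\cdot n_j-a_j)=0$ for all $n\in N$; use this relation to eliminate one coordinate. For $h$ in a finite transversal of $N$ in $H$, substitute $(a_m\cdot n_m-a_m)\cdot h_m=-\sum_{j<m}(a_j\cdot n_j-a_j)\cdot h_m$ into your Step 2 identity. Since $B$ is abelian, this gives
\[ F_{\mathbf a}(hn)=F_{\mathbf a}(h)-\sum_{j=1}^{m-1}b_j+\sum_{j=1}^{m-1}b_j\cdot n_j, \qquad b_j=a_j\cdot h_j-a_j\cdot h_m, \]
which depends only on the projection $\pi(n)$ of $n$ to the first $m-1$ coordinates. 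Thus $F_{\mathbf a}(hN)$ is a translate of $F_{\mathbf b}(\pi(N))$, where $\mathbf b=(b_1,\dots,b_{m-1})$ and $\pi(N)\le B^{m-1}$ is closed. It has fewer than $2^{\aleph_0}$ elements, so it is finite by the induction hypothesis. Then $F_{\mathbf a}(H)$ is a finite union of such sets. This is the paper's argument, and the period group $T$ plays no role in it.
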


\begin{proof}
We proceed by induction on $m$. When $m=1$,  $F_{\mathbf a}(H)=a_1 \cdot H$ is the orbit of $a_1$, and by the Orbit Stabiliser Lemma,
\[ |a_1\cdot H| = \bigl|H/\Stab_H(a_1)\bigr|. \]
Since $\Stab_H(a_1)$ is a closed normal subgroup of the profinite group $H$, the quotient $H/\Stab_H(a_1)$ is a profinite group; and, hence, it is either finite or has cardinality at least the continuum.

\medskip

Assume by induction hypothesis the statement for $m-1$, and let $H\leq B^m$ be closed. Assume that $|F_\a(H)| < 2^{\aleph_0}$. Since $F_\a$ is a continuous map between Stone spaces, there exists an open coset $g K \subseteq B^m$ such that $F_\a$ is constant when restricted to $gK$. Choose a finite set $R \subseteq H$ such that $\{ gh \mid h \in R \}$  is a left-transversal for $K$ in $H$, then
\[H=\bigcup_{h \in R} g h K,
\quad \text{ and } \quad F_{\mathbf a}(H)=\bigcup_{ h \in R} F_{\mathbf a}(  g h K) \, . \]

Furthermore, if $h=(h_1,\dots, h_m)\in R$ and $k=(k_1,\dots, k_m)\in K$, with $h_j , \, k_j \in B$, then
\[
F_{\mathbf a}(g h k)
= \sum_{j=1}^m a_j\cdot (g_j h_j k_j)
= \sum_{j=1}^m (a_j \cdot g_j)\cdot h_j  k_j
= F_{\mathbf a \cdot \textbf{g}}(h k),
\]
where $\mathbf a\cdot \textbf{g} =(a_1\cdot g_1,\dots,a_m\cdot g_m) \in A^m$.
Hence, we may assume without lose generality that $g = \textbf{1}$, and hence 
\[ F_{\a}(K) = F_\a (1, \dots, 1) = \sum_{j=1}^m a_j\]
is constant, {\it i.e.}
\begin{equation}
\label{eq: 1}
\sum_{j=1}^m a_j \cdot k_j - a_j = 0 \, ,
\end{equation}
for all $(k_1, \dots, k_m) \in K$.

\smallskip

Choose $h \in R$ and $k \in K$, then 
\[ F_\a(hk) - F_\a(h) = \sum_{j=1}^m a_j \cdot (h_j k_j ) - a_j \cdot h_j = \sum_{j=1}^{m} (a_j \cdot k_j - a_j) \cdot h_j \, . \]

Substituting \eqref{eq: 1} and using that $B$ is abelian, we get
\begin{align*}
F_\a(hk)-F_\a(h)
&= \sum_{j=1}^{m-1} (a_j \cdot k_j-a_j)\cdot h_j + (a_m  \cdot k_m-a_m)\cdot h_m \\
&= \sum_{j=1}^{m-1} (a_j \cdot k_j-a_j)\cdot h_j
   - \sum_{j=1}^{m-1} (a_j \cdot k_j-a_j)\cdot h_m \\
&= \sum_{j=1}^{m-1}
   \Bigl( (a_j \cdot h_j-a_j \cdot h_m)\cdot k_j - (a_j \cdot h_j-a_j\cdot h_m ) \Bigr) \, .
\end{align*}

Therefore, define, for each $j=1, \dots, m-1$, the fixed value $b_j(h) = a_j \cdot h_j - a_j \cdot h_m$. Then 
\begin{equation}
\label{eq: 2} 
F_\a(hk)= F_\a(h) - \sum_{j=1}^{m-1} b_j(h) + \sum_{j=1}^{m-1} b_j(h) \cdot k_j \, .
\end{equation}

Let $\pi \colon K \rightarrow B^{m-1}$ be the projection to the first $m-1$ coordinates of $B^m$, so that $\check{K} = \pi(K) \leq B^{m-1}$ is a closed subgroup. Furthermore, since translations preserve cardinality, and, since the formula \eqref{eq: 2} depends only on the first $m-1$ coordinates of $k \in K$, we have 
\[ \# F_\a (hK)  = \# \left\{ \sum_{j=1}^{m-1} b_j(h) \cdot k_j \mid (k_1, \dots, k_{m-1}) \in \check K \right\} \, , \]
and the left-hand side is either finite or has at least $2^{\aleph_0}$ points, by the induction hypothesis. 

\smallskip

Finally, either there exists $h_0 \in R$ such that $|F(h_0K)| \geq 2^{\aleph_0}$, or otherwise, since $K$ has finite index in $H$, then 
\[ F_\a (H) = \bigcup_{h \in R} F_\a( h K) \]
is finite, as it is a finite union of finite sets. This proves the statement.
\end{proof}

From the proposition above and Lemma~\ref{lem: image}, we conclude
\begin{corollary}
\label{cor: wa}
Let $\a \in A^k$ and let $w_\a$ be defined as in \eqref{eq: wa}. If $|w\{G\}|< 2^{\aleph_0}$, then, $\ima{w_\a}$ is finite.
\end{corollary}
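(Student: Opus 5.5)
We derive the corollary from Lemma~\ref{lem: image} and the preceding proposition, and use the hypothesis $|w\{G\}|<2^{\aleph_0}$ to exclude the continuum case. First, fix $\a\in A^k$ and apply Lemma~\ref{lem: image}. It gives a closed subgroup $H\le B^\ell$, namely $H=\Lambda(B^k)$, such that
\[
\ima w_\a = \Bigl\{ \sum_{j=1}^\ell a_{i_j}\cdot h_j \;\Big|\; (h_1,\dots,h_\ell)\in H \Bigr\}.
\]
This set is exactly $F_{\mathbf c}(H)$ for the tuple $\mathbf c=(a_{i_1},\dots,a_{i_\ell})\in A^\ell$, with $m=\ell$. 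The proposition therefore says that $\ima w_\a$ is either finite or of cardinality at least $2^{\aleph_0}$.

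It then suffices to show that $|\ima w_\a|<2^{\aleph_0}$. By definition, $w_\a(\b)=w(\a\b)\,w(\b)^{-1}$, where $\a\b=(a_1b_1,\dots,a_kb_k)\in G^k$. Hence $w(\a\b)\in w\{G\}$ and $w(\b)\in w\{B\}\subseteq w\{G\}$, so
\[
\ima w_\a \subseteq \{\, x y^{-1} \mid x,y\in w\{G\}\,\} .
\]
The right-hand side has cardinality at most $|w\{G\}|^2$. Since $|w\{G\}|<2^{\aleph_0}$, this is again less than $2^{\aleph_0}$: if $w\{G\}$ is finite the square is finite, and if it is infinite then $\kappa^2=\kappa$ for infinite cardinals $\kappa$. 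This excludes the second alternative, so $\ima w_\a$ is finite.

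The only step needing real care is matching Lemma~\ref{lem: image} with the proposition. The index map $j\mapsto i_j$ may repeat indices, so the proposition must be applied with the repeated tuple $\mathbf c$ rather than with $\a$ itself. We also need the action to be continuous, which holds because $B$ acts on the normal abelian subgroup $A$ by conjugation and conjugation is continuous in a profinite group. Beyond these points the argument is a direct substitution.
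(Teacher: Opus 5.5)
Your proposal is correct and is essentially the paper's argument. Lemma~\ref{lem: image} identifies $\ima w_\a$ with $F_{\mathbf c}(H)$ for $\mathbf c=(a_{i_1},\dots,a_{i_\ell})$, and the preceding proposition then gives the finite-or-continuum dichotomy; the inclusion $\ima w_\a\subseteq w\{G\}\cdot w\{G\}^{-1}$, which the paper itself records in the proof of Theorem~\ref{thm: split}, rules out the continuum case.
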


\subsection{Finitely many homomorphisms}

For every tuple $\a \in A^k$ or $\b \in B^k$, we know that the images of the maps $w_\a$ and  the homomorphisms $w_\b$ are finite, provided that $|w\{G\}| < 2^{\aleph_0}$. For this purpose we consider the \emph{compact-open} topology in $\Ca(X, Y)$ -- the continuous maps between two topological spaces $X$ and $Y$ -- to prove that the finiteness of all these images forces that there are finitely many of these maps. The neighbourhood basis of the above-mentioned topology in $\Ca(X, Y)$ is given by 
\[ [K, U] = \left\{ f \in \Ca(X, Y) \mid f(K) \subseteq U \right\},\]
where $K$ is a compact subset of $X$, and $U$ is an open subset of $Y$.

\begin{lemma}
\label{lem: compact-open}
Let $A$ and $C$ be profinite abelian groups, and let $\Fa$ be a compact subset of $\Ca(A, C)$ endowed with the compact-open topology. Suppose that $\Fa$ consists of group homomorphisms, and that for each $a \in A$, the set 
$$ \{ f(a) \mid f \in \Fa \} $$
is finite. Then, $\Fa$ is finite. 
\end{lemma}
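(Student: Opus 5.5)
The plan is a Baire category argument on $A$ to get a uniform bound on the number of values, followed by a purely group-theoretic count on an open subgroup $U$.

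\emph{Joint continuity.} $A$ is compact Hausdorff, hence locally compact, so the evaluation map $\Fa\times A\to C$, $(f,a)\mapsto f(a)$, is jointly continuous for the compact-open topology. Since $\Fa$ is compact, it is equicontinuous (Ascoli). Thus, for every open subgroup $V\le C$ there is an open subgroup $U_V\le A$ with $f(U_V)\subseteq V$ for all $f\in\Fa$.

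\emph{Baire step.} For $N\in\N$ put
\[A_N=\{a\in A\mid |\{f(a)\mid f\in\Fa\}|\le N\}.\]
By hypothesis $A=\bigcup_N A_N$. Each $A_N$ is closed. Indeed, if $a\notin A_N$, pick $f_0,\dots,f_N$ with pairwise distinct values at $a$. By continuity of the finitely many maps $f_i$, these values stay pairwise distinct on a neighbourhood of $a$, so the complement of $A_N$ is open. By the Baire category theorem some $A_N$ contains a coset $x+U$ with $U$ an open subgroup of $A$. For $u\in U$ we have $f(u)=f(x+u)-f(x)$, so
\[|\{f(u)\mid f\in\Fa\}|\le N\cdot |\{f(x)\mid f\in\Fa\}|=:M\]
for every $u\in U$, a bound uniform over $U$.

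\emph{Reduction to restrictions.} Since $U$ has finite index in $A$, fix a finite transversal $T$ of $U$ in $A$. Each homomorphism $f$ is determined by its restriction $f|_U$ together with the values $f(t)$, $t\in T$. These values range over finitely many tuples by hypothesis. So it suffices to prove that the set $\{f|_U\mid f\in\Fa\}$ is finite, and the goal is to bound it by a function of $M$.

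\emph{Main obstacle.} Suppose $f_0,\dots,f_n$ have pairwise distinct restrictions to $U$, and consider the homomorphism
\[\Psi\colon U\to C^{n+1},\qquad u\mapsto (f_0(u),\dots,f_n(u)).\]
Its image $W$ is a subgroup each of whose elements has at most $M$ distinct coordinates. Hence $W$ is covered by the finitely many subgroups $W_{ij}=\{c\in W\mid c_i=c_j\}$, and each $W_{ij}$ is proper because $f_i|_U\ne f_j|_U$. A group can be a finite union of proper subgroups, so this alone gives no contradiction. My plan is to use B.H. Neumann's lemma: when such a cover is irredundant, the members have index bounded in terms of the number of subgroups, so some $W_{ij}$ of bounded index remains. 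I would then combine this with a Ramsey-type pigeonhole on pairs $\{i,j\}$ once $n\gg M$. The aim is a large set $I$ of indices such that the $f_i$, $i\in I$, agree on a common finite-index subgroup of $U$. I would then iterate the Baire step inside that subgroup, or alternatively exploit the compactness of $\Fa$ to rule out the infinite descending chains this produces. This step is where I expect the real difficulty. If it proves delicate, my first alternative is to pass to quotients $C/V$: by the equicontinuity above, $\Fa$ has only finitely many images in $\operatorname{Hom}(A/U_V,C/V)$ for each $V$. I would then try to show that these counts stabilise, using $M$ together with the compactness of $\Fa$.
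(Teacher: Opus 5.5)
\textbf{There is a genuine gap, exactly where you flagged it.} The argument after your reduction to restrictions is never carried out, and the route you sketch cannot be completed.

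Your Baire step on the sets $A_N$ never uses compactness of $\mathcal F$. The uniform bound $M$ it produces therefore carries no information: it holds automatically whenever $C$ is finite. Here is a concrete case. Take $A=\prod_{i\in\N}\Z/2$, $C=\Z/2$, and let the maps be the coordinate projections $\pi_i$. These are continuous homomorphisms and take at most $2$ values at every point, yet there are infinitely many of them. The only hypothesis that fails is compactness: for finite $C$ the compact-open topology on $\Ca(A,C)$ is discrete.

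The finite families $\{\pi_1,\dots,\pi_n\}$ on $U=(\Z/2)^n$ also show that $|\{f|_U\mid f\in\mathcal F\}|$ admits no bound in terms of $M$. So your stated goal is false, and no Neumann/Ramsey count can reach it, since such a count only sees the group-theoretic data, which this example satisfies. Iterating inside finite-index subgroups produces exactly the shrinking chains visible in this example, and you give no mechanism by which compactness rules them out. Your equicontinuity observation is correct but is never used. Showing that the counts $|\{q_V\circ f\mid f\in\mathcal F\}|$ stabilise is just a restatement of the claim.

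\textbf{The missing idea.} Apply Baire not to value-count sets but to agreement sets with an accumulation point of $\mathcal F$; this is where compactness must enter. The paper first reduces to second countable $C$. If $\mathcal F$ is infinite, choose open subgroups $C=N_0\ge N_1\ge\cdots$ along which $|\{q_{N_i}\circ f\mid f\in\mathcal F\}|$ strictly increases, and replace $C$ by $C/\bigcap_i N_i$. Your quotient idea is close to this step.

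Then $\mathcal F$ is compact metrizable, so there are pairwise distinct $f_n\in\mathcal F$ converging to some $f\in\mathcal F$. Each $\{g(a)\mid g\in\mathcal F\}$ is finite, hence discrete, so $f_n(a)=f(a)$ for all $n\ge n_a$. Thus $A$ is the increasing union of the closed subgroups $K_n=\bigcap_{m\ge n}\ker(f_m-f)$. By Baire some $K_\ell$ is open, and your finite-transversal step then forces $f_n=f$ for all large $n$, a contradiction.
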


\begin{proof}
Suppose first that $C$ is countably based, and hence metrizable. The compact-open topology on $\Ca(A, \, C)$ coincides with the topology of uniform convergence (see \cite[\S \, 46, Theorem 46.8]{Munkres}). In particular, $\Ca(A, \, C)$ is metrizable, and $\Fa$ is sequentially compact.

\smallskip
Suppose, by contradiction, that $\Fa$ is infinite, so that there exists an accumulation point $f \in \Fa$, {\it i.e.}, there exists a sequence of pairwise distinct maps $(f_n)_{n \in \N}$ in $\Fa$ that converges uniformly to $f$.

In particular, for each $a \in A$, the sequence $(f_n(a))$ in $A$ converges to $f(a)$ in $C$. Since by hypothesis $\{ f(a) \mid f \in \Fa \}$ is finite, the sequence $(f_n(a))$ stabilises, {\it i.e.} for each $a \in A$, there exists $n_a \in \N$ such that 
\[ f_n(a) = f(a), \, \text{ for all } n \geq n_a. \]

\smallskip

For each integer $n \in \N$, define the subgroup
\[ K_n = \bigcap_{m \geq n} \ker(f_m - f) . \]
Each $K_n$ is closed in $A$, and we have already proved that $ A = \bigcup_{n \in \N} K_n$. 

\smallskip
Hence, by the Baire Category Theorem, there exists $\ell \in \N$ such that $K_\ell$ has non-empty interior, and thus $K_\ell$ is a normal open subgroup in $A$. Moreover, if $n \geq \ell$,
\[  f_n(x) = f(x), \,  \text{ for all } \, x \in K_\ell \, . \]

\smallskip

Since $A/K_\ell$ is finite, choose representatives $a_1, \dots, a_k$ for the cosets of $K_\ell$ in $A$. Recall that for each $i$ the sequence $(f_n(a_i))$ stabilises, so there exists $\ell_i \in \N$ such that
\[ f_n(a_i) = f(a_i), \quad \text{for all } n \geq \ell_i. \]

Let $N = \max\{\ell, \ell_1, \dots, \ell_k\}$ and $n \geq N$. Take $a \in A$, and write $a = a_i + x$ with $x \in K_\ell$. Then,
\[ f_n(a) = f_n(a_i) + f_n(x) = f(a_i) + f(x) = f(a) \, . \]
Thus $f_n = f$ for all $n \geq N$, contradicting the assumption that the maps $f_n$ are pairwise distinct. Therefore, $\Fa$ must be finite.

\smallskip 

We now reduce the general case to the countably based one. For that purpose, we consider quotient maps $q_N \colon C \rightarrow C/N$ with $N \unlhd C$ a closed subgroup. In particular, when $N$ is open, $C/N$ is finite and $\Ca(A, \, C/N)$ is discrete. Thus each
\[ \Fa_N = \{ q_N \circ f \mid f \in \Fa \} \subseteq \Ca(A, \, C/N )\]
is finite, because it is a compact subset of a discrete space. 

Assume by contradiction that $\mathcal F$ is infinite, then we can build a descending sequence of open subgroups 
\[ C=N_0\geq N_1\geq N_2\geq \cdots \]
so that the sets $\mathcal F_{N_i}$ have strictly increasing
cardinalities. 

Let $N= \cap_{i \in \N} N_i$. On the one hand, $\Fa_N$ in $\Ca(A, C/N)$ is infinite, as the reductions modulo $N_i$ have increasing cardinalities. On the other hand, the open subgroups $N_i/N$ form a neighbourhood basis for $C/N$, so that $C/N$ is a countably based profinite group. Thus, since $\Fa_N$ is compact in $\Ca(A, C/N)$, it is finite. This contradiction yields that $\Fa$ is finite.
\end{proof}

This allows us to prove the main result of the section:

\begin{theorem}\label{thm: split}
Let $w$ be a group word and let $G$ be a profinite group. Suppose that $G =AB$ where $A \unlhd G$ and $B \leq G$ are abelian profinite subgroups of $G$, and that $|w\{G\}| < 2^{\aleph_0}$. Then, $w\{G\}$ is finite.
\end{theorem}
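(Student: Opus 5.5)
We aim to show that only finitely many of the homomorphisms $w_\b$ occur as $\b$ ranges over $B^k$. Once this is known, \eqref{eq: expression of words} writes $w\{G\}$ inside a finite union of sets $w_\b(A)\cdot w\{B\}$. Each of these is finite: $w_\b(A)$ is finite by the discussion after \eqref{eq: generalised}, and $w\{B\}=w(B)$ is finite by Lemma~\ref{lem: abelian}. Hence $w\{G\}$ is finite.

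We apply Lemma~\ref{lem: compact-open} with source $A^k$, target $C=A$, and
\[ \Fa=\{ w_\b \mid \b\in B^k\}\subseteq \Ca(A^k, A). \]
Three hypotheses must be checked.
\begin{enumerate}
\item Each $w_\b$ is a continuous homomorphism, since $A$ is abelian.
\item For each fixed $\a\in A^k$, the set $\{w_\b(\a)\mid \b\in B^k\}$ is exactly $\ima w_\a$. This is finite by Corollary~\ref{cor: wa}, using $|w\{G\}|<2^{\aleph_0}$.
\item $\Fa$ is compact in the compact-open topology.
\end{enumerate}

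For the compactness in (3), consider the map $\Phi\colon B^k\to\Ca(A^k,A)$, $\b\mapsto w_\b$. Its image is $\Fa$, so it suffices to show that $\Phi$ is continuous; then $\Fa$ is the continuous image of a compact space. The joint map $A^k\times B^k\to A$, $(\a,\b)\mapsto w(\a\b)w(\b)^{-1}$, is continuous, since it is built from group operations in $G$. Because $A^k$ is compact Hausdorff, hence locally compact, the exponential law gives continuity of the curried map $\Phi$ into $\Ca(A^k,A)$ with the compact-open topology.

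The expected obstacle is this continuity step. If one prefers to avoid quoting the exponential law, one can check it directly on a subbasic open set $[K,U]$ containing $w_{\b_0}$. Compactness of $K$ and continuity of the joint map give a neighbourhood $V$ of $\b_0$ with $w_\b(K)\subseteq U$ for all $\b\in V$, by the standard tube-lemma argument. This is routine, and everything else follows directly from the lemmas already established.
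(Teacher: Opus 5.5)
Your proposal is correct and follows the paper's proof essentially step for step. Like the paper, you apply Lemma~\ref{lem: compact-open} to $\Fa=\{w_\b \mid \b\in B^k\}$, obtain pointwise finiteness from Corollary~\ref{cor: wa} via $\{w_\b(\a)\mid \b\in B^k\}=\ima w_\a$, and obtain compactness of $\Fa$ as the continuous image of $B^k$ under $\b\mapsto w_\b$, before concluding with \eqref{eq: expression of words}. (A small remark: in the currying step, the direction you need, that joint continuity implies continuity of the curried map into the compact-open topology, holds without any local compactness assumption on $A^k$.)
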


\begin{proof}
On the one hand, applying Lemma~\ref{lem: abelian} to $B$, since $|w\{B\}| \leq |w\{G\}| < 2^{\aleph_0}$, then $w(B)$ is finite. On the other hand, for each fixed $\b \in B^k$, $\ima{w_\b}$ is also finite. We shall prove that the set of homomorphisms 
\[ \Fa = \{w_{\mathbf b}\mid \mathbf b\in B^k\}\subseteq \Ca(A^k, \, A)
\]
is finite. Then, $w\{G \}$ is finite in view of the formula \eqref{eq: expression of words}.  

\medskip 

In order to apply Lemma~\ref{lem: compact-open}, we shall prove that $\Fa$ is compact with the compact-open topology in $\Ca(A^k, \, A)$, and that for each $\a \in  A^k$ the set 
\[ \{ w_{\b}(\a) \mid \b \in B^k \}\]
is finite. The latter follows by Corollary~\ref{cor: wa}, since
\[ \{ w_{\b}(\a) \mid \b \in B^k  \} = \ima{ w_{\a}} \subseteq w\{G\}\cdot w\{G\}^{-1} \, ,\]
and hence $|\ima{w_{\a}}| < 2^{\aleph_0}$.

For the former, observe that the map 
\[ \Phi \colon B^k \rightarrow \Ca(A^k, A) \quad \b \mapsto w_\b\]
is continuous, where $\Ca(A^k, A)$ is endowed with the compact-open topology.  Indeed, each evaluation map
\[ B^k\times A^k \to A, \qquad (\mathbf b,\mathbf a) \mapsto w_{\mathbf b}(\mathbf a) \]
is continuous. Hence, $\Fa = \Phi(B^k)$ is compact, as it is the image of a compact set by a continuous map. 
\end{proof}

\section{Marginality and centrality} \label{sec: marginal}

In this section, we prove two reductions that will be useful to prove Theorem~\ref{thm: metabelian}. The first is about the \emph{marginal subgroup} of a word $w$ in $G$, namely  
\[
w^*(G)=
\left\{\, x\in G \ \middle|\
\begin{array}{l}
w(g_1,\ldots, g_ix,\ldots,g_k)
= w(g_1,\ldots,g_i,\ldots,g_k), \\[2mm]
\forall\, i=1,\ldots,k,\ \forall\, g_1,\ldots,g_k\in G
\end{array}
\right\} \, .
\]

We say that $N \leq G$ is marginal for $w$ in $G$ if $N \leq w^*(G)$. 

\begin{proposition}\label{lem: marginal}
Let $G$ be a profinite metabelian group and let $w$ be a word. Suppose that $|w\{G \}| < 2^{\aleph_0}$. Then $G'$ is finite-by-marginal for $w$.
\end{proposition}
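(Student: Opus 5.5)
The plan is to show that $A:=G'$ contains a subgroup $N$ that is normal in $G$, open in $A$ (so $A/N$ is finite) and marginal for $w$. Throughout, $A$ is abelian and normal in $G$, and $G$ acts on $A$ by conjugation. This action factors through $\bar G = G/C_G(A)$, and $\bar G$ is abelian because $G' = A \le C_G(A)$.

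\textbf{Step 1: generalised word maps on $A$.} For each $\mathbf g \in G^k$ I will consider
\[ w_{\mathbf g}\colon A^k\to A,\qquad \mathbf a\mapsto w(a_1g_1,\dots,a_kg_k)\,w(\mathbf g)^{-1}. \]
As in Section~\ref{sec: split}, since $A$ is abelian and normal, this equals $\sum_j a_{i_j}\cdot \nu_j(\mathbf g)$ in additive notation. So it is a continuous homomorphism. Its image is a closed subgroup of $A$ contained in $w\{G\}w\{G\}^{-1}$, hence of cardinality $<2^{\aleph_0}$, and so it is finite. As in the proof of Theorem~\ref{thm: split}, the map $\mathbf g\mapsto w_{\mathbf g}$ is continuous from $G^k$ into $\Ca(A^k,A)$ with the compact-open topology. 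Hence $\Fa=\{w_{\mathbf g}\}$ is compact.

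\textbf{Step 2: finite pointwise values.} Fix $\mathbf a\in A^k$. The value $w_{\mathbf g}(\mathbf a)=\sum_j a_{i_j}\cdot \nu_j(\mathbf g)$ depends only on the images $\overline{\nu_j(\mathbf g)}\in\bar G$. The tuples $(\overline{\nu_1(\mathbf g)},\dots,\overline{\nu_\ell(\mathbf g)})$ form the image $H$ of a continuous map into $\bar G^{\ell}$. Since $\bar G$ is abelian, $H$ is a closed subgroup, by the same argument as in Lemma~\ref{lem: image}. So $\{w_{\mathbf g}(\mathbf a)\}=F_{\mathbf a}(H)$ in the notation of the Proposition preceding Corollary~\ref{cor: wa}, now with the abelian group $\bar G$ acting on $A$. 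That set has size $<2^{\aleph_0}$, so by that Proposition it is finite. I expect this to be the main point to check: the earlier argument used only that the acting group is abelian, and the reduction to $\bar G$ supplies this. Lemma~\ref{lem: compact-open} then shows that $\Fa$ is finite.

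\textbf{Step 3: building $N$.} Each $w_{\mathbf g}$ has finite image, so its kernel is open in $A^k$. Since $\Fa$ is finite,
\[ K=\bigcap_{\mathbf g\in G^k}\ker w_{\mathbf g} \]
is an open subgroup of $A^k$. Let $N$ be the set of $x\in A$ such that the tuple with $x$ in position $i$ and $1$ elsewhere lies in $K$, for every $i$. Then $N$ is an open subgroup of $A$.

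I expect $N$ to be $G$-invariant. The reason is that conjugation by $h\in G$ satisfies $w_{\mathbf g}(\mathbf a)^h=w_{\mathbf g^h}(\mathbf a^h)$, and the family $\Fa$ is closed under replacing $\mathbf g$ by $\mathbf g^h$.

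\textbf{Step 4: marginality.} Let $x\in N$, $\mathbf g\in G^k$ and $1\le i\le k$. Write $g_ix=(g_ixg_i^{-1})g_i$, and note that $g_ixg_i^{-1}\in N$ by $G$-invariance. Then
\[ w(g_1,\dots,g_ix,\dots,g_k)=w_{\mathbf g}(1,\dots,g_ixg_i^{-1},\dots,1)\,w(\mathbf g)=w(\mathbf g). \]
Hence $N\le w^*(G)$. Since $A/N$ is finite, $G'$ is finite-by-marginal, in the sense that the marginal normal subgroup $N$ has finite index in $G'$.
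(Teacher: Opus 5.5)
Your Steps 1 and 2 are fine. They rerun the proof of Theorem~\ref{thm: split} directly inside $G$, with $G/C_G(G')$ acting on $A=G'$. The paper instead passes to the split group $S=G'\rtimes G/G'$, checks $|w\{S\}|\le |w\{G\}|^2|w\{G/G'\}|<2^{\aleph_0}$, and applies Theorem~\ref{thm: split} and Turner-Smith as black boxes. Either way you learn that the family $\mathcal F=\{w_{\mathbf g}\}$ is finite and that every $w_{\mathbf g}(A^k)$ is finite.

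The gap is in what you conclude from this. Steps 3 and 4 produce a $G$-invariant subgroup $N$ of \emph{finite index} in $G'$ with $N\le w^*(G)$, i.e.\ they show that $G'$ is marginal-by-finite. Your last sentence explicitly redefines ``finite-by-marginal'' to mean this. The proposition asserts the reverse: there is a \emph{finite} normal subgroup $N\le G'$ of $G$ such that $G'/N$ is marginal for $w$ in $G/N$. That is exactly the form used in Proposition~\ref{prop: semiconcise}, where it gives $[w(G),G]\le N$ finite. Your version does not yield this directly. In $w(\mathbf g)^t=w(g_1[g_1,t],\dots,g_k[g_k,t])$ the elements $[g_i,t]$ are only controlled modulo an open subgroup. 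So you cannot conclude that $[w\{G\},G]$ lies in a finite subgroup without going back to the finiteness of $\mathcal F$.

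The repair uses only what you already have. Let $M$ be the subgroup of $A$ generated by $\bigcup_{\mathbf g\in G^k}w_{\mathbf g}(A^k)$.
\begin{itemize}
\item $M$ is finite, since $\mathcal F$ is finite and each $w_{\mathbf g}(A^k)$ is a finite subgroup of the abelian group $A$.
\item $M$ is normal in $G$, because $w_{\mathbf g}(\mathbf a)^h=w_{\mathbf g^h}(\mathbf a^h)$.
\item For $x\in G'$ one has $w(g_1,\dots,g_ix,\dots,g_k)=w_{\mathbf g}(1,\dots,g_ixg_i^{-1},\dots,1)\,w(\mathbf g)\in M\,w(\mathbf g)$, so $G'/M$ is marginal for $w$ in $G/M$.
\end{itemize}
This $M$ plays the role of the paper's $N=G'\cap w(S)$. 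Obtaining it this way even avoids the appeal to Turner-Smith.
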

\begin{proof}
We denote $A=G'$ and $B=G/G'$. We work on the split metabelian group $S = A \rtimes B$. For $\a=(a_1, \dots, a_k) \in A^k$ and $\b = (b_1, \dots, b_k) \in B^k$, let $\h = (h_1, \dots, h_k) \in G^k$  be any lift of $\b$, so that 
\[ w(a_1 b_1, \dots, a_k b_k) = w_\h(a_1, \dots, a_k) w(\b) \, ,\]
where $w_\h \colon A^k \rightarrow A$ is the usual generalised map $\a \mapsto w(\a\h)w(\h)^{-1}$ in \eqref{eq: generalised} (in other words, the action of $G$ in $G'$ factors through the abelianization $G/G'$). In particular, 
\[ |w\{ S\} | \leq |w\{G \}|^2| w\{G/G'\}| < 2^{\aleph_0} \, . \]
Hence, by Theorem~\ref{thm: split}, $w(S)$ is finite. Let $N= A \cap w(S) \leq A$. Since $w(S)$ is normal in $S$, and the action of $G$ factors through $G/G'$, then $N \unlhd G$. Then, 
\[ w(a_1 h_1, \dots, a_k h_k) \equiv w(h_1, \dots, h_k) \pmod{N}, \quad \text{ for all } \, a_j \in G'  \, \text{ and } h_j \in G \, , \]
and thus, 
\[ w(\bar{a}_1 \bar{h}_1, \dots, \bar{a}_k \bar{h}_k)= w(\bar{h}_1, \dots, \bar{h}_k) \quad \text{ for all } \, \bar{a}_j \in G' /N \, \text{ and } \bar{h}_j \in G/N \, . \]
That is, $G'/N$ is marginal for $w$ in $G/N$. 
\end{proof}

\begin{proposition}\label{prop: semiconcise}
Let $G$ be a profinite metabelian group and let $w$ be a word. Suppose that $|w\{G \}| < 2^{\aleph_0}$. Then $[w(G), G]$ is finite. 
\end{proposition}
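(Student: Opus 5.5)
The plan is to combine Proposition~\ref{lem: marginal} with the elementary observation that conjugating the arguments of a word only changes them by commutators, which lie in $G'$. No further cardinality argument should be needed: the hypothesis $|w\{G\}|<2^{\aleph_0}$ is used only through Proposition~\ref{lem: marginal}.

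First, I apply Proposition~\ref{lem: marginal}. This gives a finite subgroup $N\le G'$, normal in $G$, such that $G'/N$ is marginal for $w$ in $\bar G=G/N$. Since $G'$ maps onto $\bar G'$, this says that $\bar G'\le w^*(\bar G)$. I will work in $\bar G$ and show that every $w$-value of $\bar G$ is central.

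Take $\bar x=w(\bar g_1,\dots,\bar g_k)$ and $\bar g\in\bar G$. Then
\[
\bar x^{\bar g}=w(\bar g_1^{\bar g},\dots,\bar g_k^{\bar g})=w(\bar g_1[\bar g_1,\bar g],\dots,\bar g_k[\bar g_k,\bar g]).
\]
Each $[\bar g_i,\bar g]$ lies in $\bar G'\le w^*(\bar G)$. Applying the defining property of the marginal subgroup once for each coordinate $i=1,\dots,k$, I can delete these factors one at a time. This gives $\bar x^{\bar g}=\bar x$, so $w\{\bar G\}\subseteq Z(\bar G)$.

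The centre $Z(\bar G)$ is a closed subgroup of $\bar G$. Hence the closed subgroup generated by $w\{\bar G\}$ is also central, so $w(\bar G)\le Z(\bar G)$. Since $w(G)N/N=w(\bar G)$ (continuous image of a compact set, and $N$ is finite), we get $[w(G),G]\le N$. As $N$ is finite, $[w(G),G]$ is finite.

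The only delicate points are bookkeeping ones. One must check that marginality of $G'/N$ is exactly the statement used in the conjugation step, including the order in which the factor $[\bar g_i,\bar g]$ appears; the definition of $w^*$ allows right multiplication by $x$, which matches. One must also check that passing to closures preserves centrality; this holds because centralisers in a Hausdorff group are closed. I expect no genuine obstacle beyond Proposition~\ref{lem: marginal} itself, which has already been established.
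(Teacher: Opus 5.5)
Your proof is correct and is essentially the paper's argument: you invoke Proposition~\ref{lem: marginal} to get a finite normal $N\le G'$ with $G'/N$ marginal for $w$ in $G/N$, then use $w(g_1,\dots,g_k)^t=w(g_1[g_1,t],\dots,g_k[g_k,t])$ and marginality to conclude $[w\{G\},G]\le N$. You also check that centrality passes from the $w$-values to the closed subgroup $w(G)$ because centralisers are closed, a step the paper leaves implicit.
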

\begin{proof}
In view of Proposition~\ref{lem: marginal}, there exists a finite normal subgroup $N \unlhd G$ such that $N \leq G'$ and $G'/N$ is marginal for $w$ in $G/N$. Thus, if $g_1, \dots , g_k , t \in G$, then 
\[ w(g_1, \dots, g_k)^t \equiv w(g_1^t, \dots, g_k^t) \equiv w(g_1[g_1, t], \dots, g_k[g_k, t]) \equiv w(g_1, \dots, g_k) \mod{N} \, . \]
 Hence, $[w\{G \}, G] \leq N$, and thus $[w(G), G] \leq N$ is finite.
\end{proof}

\begin{remark}
We could have obtained the previous reduction more easily for pro-$p$ metabelian groups. Finitely generated abelian-by-nilpotent groups are verbally elliptic \cite{Stroud} (for every word $w$ there exists an integer $m \in \N$ such that every element of $w(G)$ is the product of at most $m$ $w$-values or inverses of $w$-values). Therefore, when $G$ is a pro-$p$ group, we could alternatively prove strong conciseness for all groups in the class of pro-$p$ metabelian groups by first reducing to the case where $w(G)$ is central by directly citing \cite[Corollary 4.2]{DKS}.
\end{remark}

\section{Polynomial maps}
\label{sec: polynomial}

We start by presenting the notion of \emph{polynomial map} between groups, introduced by Passi \cite{Passi}, we refer to \cite{Tao} for a more modern perspective. Intuitively, these are maps whose sufficiently large ``derivative'' vanishes.  More precisely, let $G$ and $A$ be groups, and $f \colon G \rightarrow A$. Define, for $h \in G$, a \emph{difference} operator as
$$\Delta_h f(x)= f(x)^{-1} f(xh) \, .$$
Then, $f$ is \emph{polynomial} of degree at most $n$ if all $(n+1)$ iterated differences vanish, {\it i.e.} 
\[ \Delta_{h_{n+1}} \dots \Delta_{h_1} f(x) = 1\, \]
 for any \(x,h_1,\dots,h_{n+1}\in G\).

 \smallskip
 
 When $A$ is abelian there is an equivalent characterisation in terms of the augmentation ideal. In this case, $f \colon G \rightarrow A$ can be linearly extended to a $\Z$-linear map $ \hat{f} \colon \Z[G] \rightarrow A$. Then $f$ is a \emph{polynomial map} of degree at most $n$ if $\hat{f} (I(G)^{n+1})=0$, where $I(G)$ is the augmentation ideal, namely the kernel of the augmentation map $$\varepsilon \colon \Z[G] \to \Z,  \quad \sum_{g \in G} a_g g \mapsto \sum_{g \in G} a_g \, . $$  

For instance, constant maps are polynomial of degree $0$ and nontrivial homomorphisms are polynomial of degree $1$. We simply say that $f$ is polynomial if it is polynomial of some finite degree.

\subsection{Polynomial maps in profinite groups}

We study the size of the image of a polynomial map.

\begin{proposition} \label{prop: polynomial strongly concise}
Let $G$ and $A$ be profinite groups, and let $f \colon G \rightarrow A$ be a continuous polynomial map. Then either $f(G)$ is finite or $|f(G)| \geq 2^{\aleph_0}$. 
\end{proposition}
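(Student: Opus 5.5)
Induction on the degree $n$ of $f$, using Proposition~\ref{prop: DKS} to find an open coset on which $f$ is constant. The difference maps of $f$ then have lower degree and, on that coset, are confined to a closed subgroup.

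For the base case $n=0$: $\Delta_h f \equiv 1$ for all $h$, so $f$ is constant and $f(G)$ is a single point.

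For the inductive step, suppose $f$ has degree at most $n\ge 1$ and $|f(G)|<2^{\aleph_0}$; we show $f(G)$ is finite. Since $f$ is continuous between Stone spaces, Proposition~\ref{prop: DKS} gives a non-empty open set on which $f$ is constant. Shrinking it, we get $x_0\in G$ and an open normal subgroup $K\unlhd G$ with $f$ constant on $x_0K$. Choose a finite transversal $R$ of $K$ in $G$, so that
\[ G=\bigcup_{r\in R} x_0 K r, \qquad f(G)=\bigcup_{r\in R} f(x_0Kr). \]
It therefore suffices to show that each $f(x_0Kr)$ is finite. Since $K$ is normal, $x_0Kr=x_0rK$.

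Fix $y\in G$ and consider the map
\[ g_y\colon K\to A,\qquad k\mapsto f(y)^{-1}f(yk)=\Delta_k f(y). \]
Its image has the same cardinality as $f(yK)$, since the two differ by left translation by $f(y)^{-1}$. In particular $|g_y(K)|<2^{\aleph_0}$.

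The key claim is that $g_y$ is again a continuous polynomial map on the profinite group $K$. We will try two routes.

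The first route is the cocycle identity
\[ \Delta_{kk'}f(y)=\Delta_k f(y)\,\Delta_{k'}f(yk). \]
This expresses $g_y$ in terms of the differences $\Delta_{k}f$, which have degree at most $n-1$. The aim is to show that $g_y(k)\,g_{y}(1)^{-1}$ has vanishing $n$-th differences in $k$ (note $g_y(1)=1$).

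The second, more direct, route observes that the restriction of a polynomial map to a coset $yK$ is polynomial in $k$ of the same degree. Indeed, $k\mapsto f(yk)$ has iterated differences $\Delta_{h_{n+1}}\cdots\Delta_{h_1}$ with $h_i\in K\subseteq G$, and these vanish. So $g_y$ is polynomial of degree at most $n$ on $K$, but the degree does not drop. Hence induction on $n$ alone is not enough, and the argument must combine induction on the degree with the constancy on $x_0K$.

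The plan is therefore to use the vanishing on $x_0K$ to lower the degree. Set $\phi(k)=\Delta_k f$. The map $\Delta_{k}f(x)$ is polynomial of degree at most $n-1$ in $x$. Its image is contained in $f(G)^{-1}f(G)$, which has cardinality $<2^{\aleph_0}$. By induction, each $\Delta_kf(G)$ is therefore finite. To pass from each difference map having finite image to $f(G)$ being finite, we need uniformity over $k\in K$.

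We get this uniformity with the compact--open argument of Lemma~\ref{lem: compact-open}. The family $\{\Delta_k f\mid k\in K\}$ is the continuous image of $K$ in $\Ca(G,A)$, so it is compact. We want to show this family is finite.

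Once the family is finite, the differences $\Delta_kf$ on the coset $x_0r K$ take only finitely many values. Then $f(x_0rK)\subseteq f(x_0r)\cdot\bigcup_k \Delta_kf(x_0r)$ is finite.

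The main obstacle is exactly this finiteness of the family. Lemma~\ref{lem: compact-open} used that the maps were homomorphisms and relied on Baire category. Here we would replace the homomorphism property by the polynomial structure, arguing by induction on degree in the same Baire-category style. Alternatively, when $A$ is abelian we would use the augmentation-ideal description to make $\hat f$ linear and reduce to the linear argument. This is the step I expect to require the most care.
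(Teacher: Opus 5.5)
Your proposal has a genuine gap. The step you flag as the main obstacle, finiteness of the family $\{\Delta_k f\mid k\in K\}$, is never carried out, and the tool you suggest cannot deliver it. An analogue of Lemma~\ref{lem: compact-open} needs the pointwise hypothesis that, for each fixed $x\in G$, the set $\{\Delta_k f(x)\mid k\in K\}$ is finite. But this set is exactly $f(x)^{-1}f(xK)$, so the hypothesis is the finiteness of $f(xK)$ you are trying to prove, and the argument is circular. Without that pointwise hypothesis, compactness of the family together with finiteness of each image gives nothing. (For instance, the maps $f_n\colon \Z_p\to\Z_p$ equal to $p^n$ on $p^n\Z_p$ and $0$ elsewhere, together with their uniform limit $0$, form an infinite compact family of continuous maps with finite images.) Notice also that in your final plan the constancy of $f$ on $x_0K$, obtained from Proposition~\ref{prop: DKS}, is never used.

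The idea you were circling in your first two routes is what closes the argument, and it is the paper's proof. Since every $(n+1)$-th difference of $f$ vanishes, for fixed $u_1,\dots,u_n$ the function $x\mapsto \Delta_{u_1}\cdots\Delta_{u_n}f(x)$ is constant on $G$. Differences commute with left translation: writing $f_y(k)=f(yk)$ for $k\in K$, one has $\Delta_{u_1}\cdots\Delta_{u_n}f_y(k)=\Delta_{u_1}\cdots\Delta_{u_n}f(yk)$. Hence, for $u_1,\dots,u_n\in K$, this equals $\Delta_{u_1}\cdots\Delta_{u_n}f(x_0)=\Delta_{u_1}\cdots\Delta_{u_n}f_{x_0}(1)=1$, because $f_{x_0}$ is constant on $K$ and $n\ge 1$. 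So your remark that ``the degree does not drop'' holds only if you ignore the constancy on $x_0K$. Using it, every $f_y\colon K\to A$ is a continuous polynomial map of degree at most $n-1$ with $|f_y(K)|\le|f(G)|<2^{\aleph_0}$. The induction hypothesis then makes each $f_y(K)$ finite, and $f(G)=\bigcup_{t\in T}f_t(K)$ over a finite transversal $T$ is finite. No uniformity argument over the family of difference maps is needed.
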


\begin{proof}
We proceed by induction on the degree of $f$. If $f$ has degree $0$, then $f$ is constant. 

Let us denote by $d$ the degree of $f$, and assume, by induction hypothesis, that the statement holds for polynomial maps of degree $\leq d-1$. Assume, moreover, that $|f(G)| < 2^{\aleph_0}$. By Proposition~\ref{prop: DKS}, there exists an open subgroup $U \leq G$ such that $f$ is constant on a coset $xU$, {\it i.e.} $f(xu) = f(x)$, for all $u \in U$. In other words, if, for $g \in G$ we define the map $f_g \colon U \rightarrow A$ such that $f_g(u)=f(gu) \in A$, then $f_x$ is constant. 

We will prove that $f_g$ is polynomial of degree at most $d-1$ for every $g \in G$. Indeed, since $f$ is polynomial of degree $d$, any $d$-th difference
\[ \Delta_{u_1} \dots \Delta_{u_{d}} f(g) \]
is independent of $g \in G$ (its value is a constant depending only on $u_1, \dots, u_d \in U$). Therefore, 
\[ \Delta_{u_1} \dots \Delta_{u_{d}} f_g(u) = \Delta_{u_1} \dots \Delta_{u_{d}} f(gu) = \Delta_{u_1} \dots \Delta_{u_{d}} f(x) = \Delta_{u_1} \dots \Delta_{u_{d}} f_x(1) = 1 \, ,\]
for all $u \in U$, using for the last equality that $f_x$ is constant, and thus polynomial of degree $0$. 

Since $|f_g(U)| \leq |f(G)| < 2^{\aleph_0}$, then $f_g(U)$ is finite by the induction hypothesis. Finally, choose a finite transversal $T$ for $U$ in $G$, so that
\[ f(G)= \bigcup_{t \in T} f_t(U)\]
is also finite.
\end{proof}

For instance, word maps are polynomial in nilpotent groups:

\begin{proof}[Proof of Theorem~\ref{thm: nilpotent}]
If $f_1,  f_2 \colon G \rightarrow N$ are polynomial maps into a nilpotent group $N$, Leibman (see \cite[Theorem 3.2]{Leibman}) proves that their pointwise product $f_1 f_2 \colon G \rightarrow N$ where $(f_1 f_2)(x)=f_1(x)f_2(x)$, and the pointwise inverse $f_1^{-1} \colon G\rightarrow N$ where $g \mapsto f_1(g)^{-1}$ are also polynomial. Since the maps sending $(x_1, \dots, x_k) \in G^k$ to $x_i$ in $G$ are trivially polynomial of degree $1$, then it follows easily that, for a nilpotent group $N$, any word map $w \colon N^k \rightarrow N$ is polynomial. 
\smallskip

In particular, if $|w\{G\}| < 2^{\aleph_0}$, then $w\{G\}$ is finite, and thus so is $w(G)$ by Turner-Smith ({\it loc. cit.}).
\end{proof}

\subsection{Central valued word maps}

Henceforward, we will restrict ourselves to commutator words, that is to words $w \in F_k'$. This is enough for our purposes because of the following result:

\begin{proposition}[\textup{\cite[Lemma~6.1]{DKS}}]\label{prop: commutator}
Let $\Ca$ be a class of profinite groups where every commutator word is strongly concise. Then every word is strongly concise in $\Ca$.
\end{proposition}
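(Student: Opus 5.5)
The plan is to reduce an arbitrary word $w$ to commutator words by a sequence of auxiliary commutator words. Each of these will have fewer than $2^{\aleph_0}$ values, so the hypothesis on $\Ca$ applies to it. If $w \in F_k'$ there is nothing to prove, so assume it is not. Let $(e_1,\dots,e_k)$ be the image of $w$ in $F_k/F_k' \cong \Z^k$ and $n=\gcd(e_i) \neq 0$. By a Nielsen automorphism of $F_k$ (which does not change the set $w\{G\}$ nor $w(G)$ for any $G$) we may assume that $w = x_1^n v$ with $v \in F_k'$. I take for granted that $\Ca$ is closed under quotients by closed normal subgroups. This holds for the classes of interest in this paper (metabelian, nilpotent), and I would add it as a standing assumption if the original source does. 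Fix $G\in\Ca$ with $|w\{G\}|<2^{\aleph_0}$.

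\emph{Step 1: the commutator part $v$.} Since $w(g,1,\dots,1)=g^n$, every $n$-th power lies in $w\{G\}$. Hence
\[
v(g_1,\dots,g_k)=g_1^{-n}w(g_1,\dots,g_k)\in w\{G\}^{-1}w\{G\},
\]
so $|v\{G\}|<2^{\aleph_0}$. As $v$ is a commutator word, $v(G)$ is finite. It is a closed normal subgroup, so we may pass to $\bar G=G/v(G)\in\Ca$. In $\bar G$ we have $w(\bar{\mathbf g})=\bar g_1^{\,n}$, so $w\{\bar G\}=\{\bar g^{\,n}\}$.

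\emph{Step 2: centralising.} Let $c_1(x,y)=[x^n,y]$, a commutator word. Since the set of $n$-th powers is closed under conjugation, $c_1\{\bar G\}\subseteq w\{\bar G\}^{-1}w\{\bar G\}$, so $c_1(\bar G)$ is finite. Modulo it, all $n$-th powers are central. Next take $c_2(x,y)=x^ny^n(xy)^{-n}\in F_2'$. Its values lie in a product of three copies of $w\{\bar G\}^{\pm1}$, so $c_2(\bar G)$ is finite as well. Let $M$ be the (finite, normal) preimage in $G$ of both these subgroups together with $v(G)$. In $G/M$ the map $g\mapsto g^n$ is a continuous homomorphism into the centre. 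Its image is therefore a closed subgroup, i.e.\ a profinite group of cardinality $<2^{\aleph_0}$, hence finite (as in Lemma~\ref{lem: abelian}).

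\emph{Step 3: lifting back.} The image of $w(G)$ in $G/M$ is generated by the image of $w\{G\}$, which is the finite group just obtained. Thus $w(G)M/M$ is finite, and since $M$ is finite, $w(G)$ is finite.

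The main point to get right is Step 2: choosing commutator words whose value sets are controlled by $w\{G\}$, so that finitely many finite-verbal-subgroup quotients turn the power map into a central homomorphism. I would check carefully that $c_1$ and $c_2$ really lie in $F_2'$, and that their value sets are finite products of copies of $w\{G\}^{\pm1}$; cardinal arithmetic then keeps them below the continuum.
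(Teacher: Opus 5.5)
The paper gives no proof of this proposition; it imports it from \cite[Lemma~6.1]{DKS}, so there is no internal argument to compare with. Your argument is sound. The Nielsen normalisation $w=x_1^n v$, the identity $w(g,1,\dots,1)=g^n$, the choice $c_2=x^ny^n(xy)^{-n}\in F_2'$ and the lifting in Step~3 are all correct.

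The one real flaw is the closure of $\Ca$ under quotients that you ``take for granted''. It is not part of the statement, and your proof does not need it, so drop it. In Step~1 you already observed that the set $P=\{g^n\mid g\in G\}$ of $n$-th powers of $G$ itself lies in $w\{G\}$. Hence $c_1\{G\}\subseteq P^{-1}P$ and $c_2\{G\}\subseteq PPP^{-1}$ hold in $G$, and you can apply the hypothesis on commutator words to $v$, $c_1$, $c_2$ in $G\in\Ca$ directly, without passing to $G/v(G)$. Then $M=v(G)c_1(G)c_2(G)$ is a finite closed normal subgroup of $G$.

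Your Step~3 only uses that $G/M$ is a profinite group, not that it lies in $\Ca$. In $G/M$ we have $w(\bar{\mathbf g})=\bar g_1^{\,n}$, and the power map is a continuous endomorphism. Its image is the image of $w\{G\}$ in $G/M$, a closed subgroup of cardinality $<2^{\aleph_0}$, hence finite. So $w(G)M/M$ is finite, and therefore $w(G)$ is finite. With this change the argument proves the proposition for an arbitrary class $\Ca$, as stated.

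Incidentally, the word $c_1=[x^n,y]$ is superfluous. Once $c_2$ vanishes, the image of the power map is a subgroup because it is the image of a homomorphism, and centrality is never used.
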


In view of Proposition~\ref{prop: polynomial strongly concise}, it is natural to try proving that word maps on metabelian groups are polynomial. However, the statement is false in this generality, as the following example witnesses:

\begin{example} \label{ex: metabelian pol}
Inspired by \cite[Example~1]{Hu}, we consider the metabelian group
\[ G = \langle x, y ,z \mid [x, y] =1, \, x^z = y,  \, y^z = xy \rangle \, , \]
so that $G' = \langle x, y \rangle \cong \Z^2$. If the commutator map $\gamma_2 \colon G^2 \rightarrow G$ were polynomial, so would be the composition
\[ f \colon \Z \rightarrow G^2 \rightarrow G' \, , \qquad n \mapsto (y , z^n) \mapsto \left[y, z^n \right] \, . \]
We can show by induction that 
\[ f(n) = y^{-1} z^{-n} y z^n  = x^{F_{n}} y^{F_{n+1}-1} , \]
where $F_n$ is the $n$-th term of the Fibonacci sequence with $F_0 =0$ and $F_1 =1$. Hence, if $f$ were polynomial, so would the mapping $\tilde{f} \colon \Z \rightarrow \Z^2$ with $n \mapsto (F_n, \, F_{n+1})$. However, 
\[ \Delta_1 \tilde{f}(n)= \tilde{f}(n-1) ,\]
for any $n \geq 2$, and thus, $f$ is not polynomial.
\end{example}

Nevertheless, we aim to prove the following result:

\begin{theorem} \label{thm: polynomial}
Let $w$ be a commutator word and $G$ a metabelian group. Suppose that $w\{G \} \subseteq Z(G)$. Then, the word map $w \colon G^k \rightarrow Z(G)$ is polynomial.
\end{theorem}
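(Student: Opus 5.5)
The strategy is to show that some fixed power of the augmentation ideal is killed, using the characterisation of polynomiality for abelian targets recalled at the start of Section~\ref{sec: polynomial}. Since $Z(G)$ is abelian, it suffices to find $n$ with $\hat w(I(\Gamma)^{n+1})=0$, where $\Gamma=G^k$ and $\hat w\colon \Z[\Gamma]\to Z(G)$ is the linear extension of $w$. Equivalently, I will show that all iterated differences $\Delta_{h_{n+1}}\cdots\Delta_{h_1}w$ vanish. Put $A=(G')^k\unlhd\Gamma$ and $Q=\Gamma/A\cong (G/G')^k$, which is abelian. Since $w\in F_k'$, every value $w(\g)$ lies in $G'$, and the conjugation action of $G$ on $G'$ factors through $G/G'$.

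\textbf{Step 1 (Fox-calculus expansion).} Working in the free metabelian group and using Fox derivatives, write
\[ w(\a\g)=w(\g)\cdot \sum_{i=1}^k a_i\cdot \lambda_i(\bar\g),\qquad \a\in A,\ \g\in\Gamma, \]
where $\lambda_i\in\Z[F_k/F_k']$ are the abelianised Fox derivatives $\partial w/\partial x_i$, evaluated at $\bar\g\in Q$. This is the expression \eqref{eq: wa} in module language. Similarly, since $w\in F_k'$, $w$ lies in the relation module of the Magnus embedding. Hence for any $\g$, $w(\g)$ can be written as $\sum_i c_i(\g)\cdot\lambda_i(\bar\g)$, where the $c_i$ are built from the commutators $[g_i,g_j]$ and so depend polynomially on $\g$.

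\textbf{Step 2 (differences along $A$).} Because both $w(\a\g)$ and $w(\g)$ are central, the map $\a\mapsto w(\a\g)w(\g)^{-1}$ is a homomorphism $A\to Z(G)$. Consequently, any second difference in directions from $A$ vanishes. This handles the ``linear'' directions.

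\textbf{Step 3 (differences along $Q$; the main obstacle).} This is where Example~\ref{ex: metabelian pol} shows that things fail without centrality, because the map $q\mapsto m\cdot q$ is not polynomial in general. The point to exploit is that centrality of all values forces the relevant elements of $G'$ to be annihilated by $I(G/G')$. Concretely, replacing $\g$ by $\g\h$ and expanding $w(\g\h)$ via Step 1, the values $a_i\cdot\lambda_i(\bar\g)$ that occur are differences of central $w$-values, hence central. Central elements of $G'$ satisfy $m\cdot(q-1)=0$. So once a first difference along $Q$ has been taken, every further $Q$-difference acts on elements of $G'\cap Z(G)$, and there it is trivial. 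Thus I expect to show:
\begin{itemize}
\item $\Delta_{h}w$ is affine in the $A$-direction and constant in the $Q$-direction, up to terms of bounded degree;
\item $\hat w$ kills $I(A)\,I(\Gamma)$ and $I(\Gamma)^2 I(\Gamma)$, modulo lower terms.
\end{itemize}
Together these give an explicit degree bound, something like $2$ or $\len(w)$. The first thing I would try is to make this precise by inducting on the length of $w$ written as a product of conjugates of commutators, $w=\prod [u_j,v_j]^{t_j}$. The case $[u,v]$ itself would use the identity $[u,v]^{t}=[u,v]\,[[u,v],t]$, which reduces to $[u,v]$ exactly when the value is central.

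Finally, assembling Steps 2 and 3 gives $\hat w(I(\Gamma)^{n+1})=0$ for an explicit $n$. This is the statement of Theorem~\ref{thm: polynomial}.
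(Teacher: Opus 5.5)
Your Step~2 is essentially fine, since $\mathbf{a}\mapsto w(\mathbf{a}\mathbf{g})w(\mathbf{g})^{-1}$ is additive on $(G')^k$. But Step~3, which carries the entire content of the theorem, relies on a mechanism that does not work.

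The claim that after one difference ``every further $Q$-difference acts on elements of $G'\cap Z(G)$, and there it is trivial'' conflates two different operations. One is the module action $m\mapsto m\cdot(q-1)$, which does kill central $m$. The other is the difference operator $\Delta_hF(x)=F(xh)-F(x)$, which re-evaluates $F$ at a new point. For example, a nonzero homomorphism $\varphi\colon G\to Z(G)$ has values fixed by every conjugation, yet $\Delta_h\varphi\equiv\varphi(h)$ is not zero. When you actually expand iterated differences of $w$, the moving argument enters through the coefficients $\lambda_{pq}(\bar{\mathbf{g}})$ and through commutators such as $[g_p,g_q]$ and $[g_p,h_q]$. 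These pieces are \emph{not} central; only their total is.

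The same problem undermines other parts of the outline:
\begin{itemize}
\item Your Step~1 claim that the $c_i(\mathbf{g})$ ``depend polynomially on $\mathbf{g}$'' is false. Example~\ref{ex: metabelian pol} shows precisely that $n\mapsto[y,z^n]$ is not polynomial.
\item The proposed induction on $w=\prod_j[u_j,v_j]^{t_j}$ cannot start. The factors are not central-valued even when $w$ is, so neither $[u,v]^t=[u,v][[u,v],t]$ nor any inductive hypothesis about central-valued words applies to them.
\item ``Differences along $Q$'' are not defined, since $G^k$ need not split over $(G')^k$.
\item No bound like $2$ can hold. In $UT_4(\mathbb{Z})$, which is metabelian of class $3$, the word $[x_1,x_2,x_3]$ is central-valued and trilinear, hence polynomial of degree exactly $3$.
\end{itemize}

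The idea you are missing is a way to take differences of a central-valued \emph{sum} term by term when the terms themselves are neither central nor polynomial. The paper proceeds as follows.
\begin{itemize}
\item It splits $w$ into Boolean cross-effects $w_S$, each central-valued and reduced in its variables.
\item It fixes all variables but the $i$th. The restricted map is then $W(g)=\sum_{m,j}\Psi_j^m c_g^m[a_j,g]$, where each $\Psi_j^m$ is a $\mathbb{Z}$-combination of conjugations and so commutes with every $c_h$.
\item It introduces twisted differences $\nabla_{r,h}F(g)=F(gh)-c_h^rF(g)$. These are additive in $F$ and agree with $\Delta_h$ on central-valued maps for \emph{every} $r$, so $r$ can be tuned to each summand.
\end{itemize}
Concretely, $\nabla_{m+1,h}$ sends $g\mapsto\Psi c_g^m[a_j,g]$ to $g\mapsto\Psi c_g^mc_h^m[a_j,h]$, and $\nabla_{m,h'}$ kills the latter. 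Any other $\nabla_{r,h}$ sends maps of the first kind to a sum of one of each kind, and maps of the second kind to maps of the second kind. Choosing $r=m_1+1,\dots,m_s+1$ and then $r=m_1,\dots,m_s$ therefore annihilates $W$. Since $W$ is central-valued, this gives $\Delta_{h_{2s}}\cdots\Delta_{h_1}W=0$, i.e.\ polynomiality in each variable with a uniform bound. Joint polynomiality then follows by Leibman's result. Without an ingredient of this kind, your outline yields no degree bound.
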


Furthermore, since we are restricting to metabelian groups, we can regard group words as elements of the free metabelian group $M_k= F_k/F_k''$, or simply $M$ when $k$ is clear from the context, with basis $\{ x_1, \dots, x_k \}$. Moreover, $M'$ can be regarded as an $\Z[M/M']$-module under conjugation, and since $M/M' \cong \Z^k$ we identify this group algebra with the ring of Laurent polynomials
\[ \Z[M/M'] \cong \Z[\Z^k] \cong \Z[t_1^{\pm 1} , \dots , t_k^{\pm 1} ] \, .\]

With this identification, by a result of Bachmuth \cite{Ba}, any $w \in M'$ can be written in the form 
\[ w = \sum_{1 \leq j < i \leq k}  \lambda_{ij} [x_j, x_i] \, , \]
with $\lambda_{ij} \in \Z[M/M']$.

\smallskip

We are interested in the cancellation of the variables appearing in $w$. For that purpose, for each subset $T \subseteq \{ 1 , \dots, k \}$, we define the contraction $\rho_T \colon F_k \rightarrow F_k$ 
\[\rho_T(x_i)=
\begin{cases}
x_i,& i\in T,\\
1,& i \notin T \, .
\end{cases}
\]

We say that $w \in M'$ is \emph{reduced} in the $i$th variable when $\rho_{\{1, \dots, k\} \setminus \{i \}}(w) =1$, that is, 
\[ w(x_1, \dots, x_{i-1}, 1, x_{i+1}, \dots, x_k) = 1 \,  \]
in the free metabelian group.

\begin{lemma} \label{lem: reduced char}
Let $w \in M'$ be a commutator word. Then, $w$ is reduced in the variable $i$ if, and only if, there exist coefficients $\lambda_{j} \in \Z\left[t_1^{\pm 1}, \dots, t_k^{\pm 1} \right]$ such that 
\[ w =  \sum_{i \neq j} \lambda_{j} [x_i, x_j]\, .\]
\end{lemma}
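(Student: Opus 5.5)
The plan is to prove the two implications separately, working additively in the $\Z[t_1^{\pm1},\dots,t_k^{\pm1}]$-module $M'$. The tool is the contraction $\rho=\rho_{\{1,\dots,k\}\setminus\{i\}}$. It induces an endomorphism of $M$, hence of $M'$. This endomorphism is semilinear with respect to the ring endomorphism $\sigma$ of $\Z[t_1^{\pm1},\dots,t_k^{\pm1}]$ given by $t_i\mapsto 1$ and $t_j\mapsto t_j$ for $j\neq i$: for every $\lambda$ and every $u\in M'$ we have $\rho(\lambda u)=\sigma(\lambda)\rho(u)$. This holds because conjugation by $x_i$ becomes trivial after contraction.

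For the \emph{if} direction, suppose $w=\sum_{j\neq i}\lambda_j[x_i,x_j]$. Then
\[
\rho(w)=\sum_{j\ne i}\sigma(\lambda_j)\,[1,x_j]=0,
\]
so $w$ is reduced in the $i$th variable.

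For the \emph{only if} direction, I start from Bachmuth's expression $w=\sum_{a<b}\lambda_{ab}[x_a,x_b]$.
\begin{enumerate}
\item Since $[x_b,x_a]=-[x_a,x_b]$ additively, I collect every term containing the index $i$ into the form $\sum_{j\neq i}\mu_j[x_i,x_j]$. This gives
\[
w=\sum_{j\ne i}\mu_j[x_i,x_j]+w', \qquad w'=\sum_{a<b,\ a,b\neq i}\lambda_{ab}[x_a,x_b].
\]
\item Applying $\rho$ kills the first sum. Since $\rho$ fixes each $[x_a,x_b]$ with $a,b\neq i$, the hypothesis $\rho(w)=0$ gives
\[
0=\rho(w')=\sum_{a<b,\ a,b\neq i}\sigma(\lambda_{ab})[x_a,x_b].
\]
\item Hence $w'=w'-\rho(w')=\sum(\lambda_{ab}-\sigma(\lambda_{ab}))[x_a,x_b]$. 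In the Laurent ring, $\lambda-\sigma(\lambda)$ is always divisible by $t_i-1$, because $\sigma$ is evaluation at $t_i=1$. So every term has the form $\nu\,(t_i-1)[x_a,x_b]$.
\end{enumerate}

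It remains to show that $(t_i-1)[x_a,x_b]$ lies in the submodule spanned by the $[x_i,x_j]$. For this I use the metabelian Jacobi identity, obtained from the Hall--Witt identity in $M$: for distinct indices $a,b,c$,
\[
(t_a-1)[x_b,x_c]+(t_b-1)[x_c,x_a]+(t_c-1)[x_a,x_b]=0.
\]
Its exact form, with signs and possibly inverted units $t^{-1}$, depends on the commutator and action conventions. Taking $c=i$ expresses $(t_i-1)[x_a,x_b]$ as a combination of $[x_b,x_i]=-[x_i,x_b]$ and $[x_i,x_a]$, which gives the required form.

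The main obstacle is establishing this identity correctly under the paper's right-action conventions; any unit factors can be absorbed into the coefficients. I would derive it directly from Hall--Witt, using that commutators of elements of $M'$ vanish. Alternatively, I would check it via the Magnus embedding of $M$ into a free $\Z[t_1^{\pm1},\dots,t_k^{\pm1}]$-module, where $[x_a,x_b]$ corresponds to $(1-t_b)e_a-(1-t_a)e_b$ up to convention, so that the identity becomes a trivial linear cancellation.
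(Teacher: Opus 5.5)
Your proposal is correct and is essentially the paper's argument. The paper likewise writes $\lambda_{pq}=\varepsilon_i(\lambda_{pq})+(1-t_i)\mu_{pq}$ (your $\sigma$ is its $\varepsilon_i$), notes that the $\varepsilon_i$-parts of the terms avoiding $x_i$ sum to $\rho_{T_i}(w)=0$, and rewrites the remaining $(1-t_i)\mu_{pq}[x_p,x_q]$ via the metabelian Hall--Witt identity; your explicit semilinearity $\rho(\lambda u)=\sigma(\lambda)\rho(u)$ just makes precise a step the paper leaves implicit.

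Your identity $(t_a-1)[x_b,x_c]+(t_b-1)[x_c,x_a]+(t_c-1)[x_a,x_b]=0$ holds exactly under the paper's conventions ($[x,y]=x^{-1}y^{-1}xy$, right conjugation action). Taking $c=i$, it is precisely the paper's $(1-t_i)[x_p,x_q]=(1-t_q)[x_p,x_i]-(1-t_p)[x_q,x_i]$, so no unit adjustments are needed.
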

\begin{proof}
It is clear that the right-hand side vanishes when $x_i=1$. For the other implication, let $T_i= \{ 1, \dots, k \} \setminus \{i \}$ and $w = \sum_{p, q} \lambda_{pq} [x_p, x_q] \in M'$ such that $\rho_{T_i}(w)=0$. Note that the summand $\lambda_{pq}[x_p, x_q]$ only vanishes when $p=i$, or $q=i$, or when $t_i-1$ divides $\lambda_{pq}$. 

Consider the evaluation homomorphism
$$\varepsilon_i \colon  \Z\left[t_1^{\pm 1}, \dots, t_k^{\pm 1} \right] \rightarrow  \Z\left[t_1^{\pm 1}, \, t_{i-1}^{\pm 1}, \, t_{i+1}^{\pm 1}, \dots, \, t_k^{\pm 1} \right]  $$ 
defined by $t_i \mapsto 1$ and  $t_j \mapsto t_j$ for  $j \neq i$,
so that $\ker \varepsilon_i$ is the ideal generated by $1-t_i$. We write
\[ \lambda_{pq} = \varepsilon_i(\lambda_{pq}) + (1- t_i) \mu_{pq} \, ,\]
for some $\mu_{pq} \in \Z\left[t_1^{\pm 1}, \dots, t_k^{\pm 1} \right]$. Then, 
\[ w = \sum_{i \neq p, q} \varepsilon_i(\lambda_{pq}) [x_p, x_q] + \sum_{i \neq p, q} (1-t_i) \mu_{pq} [x_p, x_q] + \sum_{i \in \{p, q\}} \lambda_{pq} [x_p, x_q] \, .\]
On the one hand, 
\[\sum_{i \neq p, q} \varepsilon_i(\lambda_{pq}) [x_p, x_q] = \rho_{T_i}(w) =0 \, .\]
On the other hand, since the Hall-Witt identity in the free metabelian group reads as 
\[ (1-t_i)[x_p, x_q]=(1-t_q)[x_p,x_i]-(1-t_p)[x_q, x_i] \, , \]
then $(1-t_i)\mu_{pq}[x_p, x_q] = \sum_{s =1}^k \nu_{s} [x_i, x_s]$ for suitable coefficients $\nu_{s}$. By adjusting the coefficients in $\Z\left[t_1^{\pm 1}, \dots, t_k^{\pm 1} \right]$ we obtain the desired expression. 
\end{proof}

\begin{proposition} \label{prop: polynomial-reduced}
Let $w$ be a commutator word and $G$ a metabelian group. Suppose that $w$ is reduced in all variables, and that $w\{G \} \subseteq Z(G)$. Then, the word map $w \colon G^k \rightarrow Z(G)$ is polynomial.
\end{proposition}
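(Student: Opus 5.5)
The plan is to show that, for each coordinate $i$, a single difference in the $i$th variable turns $w$ into a word of the same kind, namely a central-valued commutator word reduced in all of its variables, but with a smaller "complexity". Induction on that complexity then shows that some iterated difference vanishes. Two general facts about maps into the abelian group $Z(G)$ will be used.

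First, $\Delta_{\mathbf h}$ on $G^k$ factors as a composition of one-variable differences. Writing $\mathbf h=(h_1,1,\dots,1)\cdots(1,\dots,1,h_k)$, we have $\Delta_{\mathbf h}f=\sum_{S\neq\emptyset}\prod_{i\in S}\Delta^{(i)}_{h_i}f$ with coordinates shifted suitably. So it suffices to prove a statement in each variable separately: for each $i$ there is $d_i$ such that $d_i+1$ differences in the $i$th variable (with arbitrary increments) kill $w$, uniformly in the other arguments and in every word produced along the way. Then $w$ is polynomial of degree at most $\sum_i d_i$.

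Second, differences of words are words. For $y$ a new variable, set
\[
(D_iw)(x_1,\dots,x_k,y)=w(\dots,x_iy,\dots)\,w(\dots,x_i,\dots)^{-1}\,w(\dots,y,\dots)^{-1}.
\]
This is a commutator word in $k+1$ variables with values in $Z(G)$, since $Z(G)$ is a subgroup. It is reduced in $x_i$ and in $y$. Because $w$ is reduced in every variable, $D_iw$ is also reduced in every $x_j$ with $j\neq i$. Moreover, $\Delta^{(i)}_y w(\mathbf x)=D_iw(\mathbf x,y)+w(\dots,y,\dots)$. The second summand does not depend on $x_i$, so it is killed by any further $x_i$-difference. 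Hence it suffices to bound how often $D_i$ (or its analogue in the variable $x_i$ of the new word) can be applied before the result is trivial on $G$.

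For the bound, I would work in $M'$ as a module over $\Z[t_1^{\pm1},\dots,t_k^{\pm1}]$, using Lemma~\ref{lem: reduced char}: $w=\sum_{j\neq i}\lambda_j[x_i,x_j]$. Using $[x_iy,x_j]=t_y[x_i,x_j]+[y,x_j]$ in the free metabelian group, one computes
\[
D_iw=\sum_{j\neq i}\bigl(\lambda_j(t_i\mapsto t_it_y)\,t_y-\lambda_j\bigr)[x_i,x_j]+\sum_{j\neq i}\bigl(\lambda_j(t_i\mapsto t_it_y)-\lambda_j(t_i\mapsto t_y)\bigr)[y,x_j].
\]
Each coefficient acquires a factor from the augmentation ideal in the new variables, essentially $(t_y-1)$ or $(t_i-1)$. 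Centrality enters here: since $w\{G\}\subseteq Z(G)$, every $w$-value is fixed by the action of $G/G'$. I would try to use this to show that, evaluated in $G$, multiplying a central-valued reduced word by $(t-1)$ for a variable $t$ already occurring in it gives the trivial map, because the value is central and conjugation by that variable fixes it. If so, then after boundedly many applications of $D_i$, bounded in terms of the $t_i$-degree of the $\lambda_j$, every coefficient lies in the ideal that annihilates values.

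The main obstacle is exactly this termination step. I do not yet have a proof of it. Example~\ref{ex: metabelian pol} shows that without centrality the process need not terminate, so centrality together with reducedness in all variables must be used precisely there. Plausibly the contribution of the substitution $t_i\mapsto t_it_y$ has to be controlled via the Hall–Witt identity used in the proof of Lemma~\ref{lem: reduced char}, together with the relation $(t_l-1)\cdot w\{G\}=0$. I would first try to prove this by induction on the total degree of the $\lambda_j$ in $t_i$. If that fails, I would measure complexity by the position of $w$ in the filtration $I^n\cdot M'$ of the augmentation ideal, and argue that central-valuedness makes the images in $G$ of sufficiently deep terms vanish.
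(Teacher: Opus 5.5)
Your reduction to one variable at a time is sound. One-variable differences in distinct coordinates commute, and in fact $\Delta_{(h,k)}=\Delta^{(1)}_h+\Delta^{(2)}_k+\Delta^{(1)}_h\Delta^{(2)}_k$ holds with no shift needed; this plays the role of the paper's appeal to Leibman. Your formula for $D_iw$ is also correct.

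\textbf{The gap.} The step you leave open is the whole content of the proposition, and the mechanism you suggest for it cannot work.

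Centrality is a property of the entire word $w$ and of its differences. It is not a property of the individual summands $\mu_j[x_i,x_j]$, so no ideal of coefficients annihilates their values in $G$. Depth in the filtration $I^n\cdot M'$ cannot force vanishing by itself either: in Example~\ref{ex: metabelian pol}, $t_z-1$ acts invertibly on $G'\cong\Z^2$, so $I^n\cdot G'=G'$ for all $n$.

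Concretely, write $\lambda_j=\sum_m\lambda_j^m t_i^m$. With your untwisted $D_i$, the $[x_i,x_j]$-coefficient of the $t_i^m$-part is $\lambda_j^m t_i^m(t_y^{m+1}-1)$, which is nonzero for every $m\neq -1$. So apart from $m=-1$, no exponent of $t_i$ ever disappears. An induction on the $t_i$-degree therefore cannot terminate, and the relation $(t_l-1)\cdot w=0$ on $G$ is never actually used.

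\textbf{The missing idea: twisted differences.} You need to use centrality at every step.

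Since $w\{G\}\subseteq Z(G)$, the word $(t_y^r-1)w=[w,y^r]$ is trivial on $G$ for every $r\in\Z$. Hence $w(\dots,x_iy,\dots)-t_y^r\,w$ takes the same values on $G$ as $\Delta^{(i)}_y w$. The same holds at every later step, because each successive difference is again central-valued.

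In this twisted difference, the $[x_i,x_j]$-coefficient of the $t_i^m$-part becomes $\lambda_j^m t_i^m(t_y^{m+1}-t_y^r)$, which vanishes for $r=m+1$. What that part leaves behind is $\lambda_j^m t_i^m t_y^m[y,x_j]$. As a function of $x_i=g$, this is a map $g\mapsto \Phi c_g^m b$, with $\Phi$ commuting with all conjugation maps. A further twisted difference with exponent $r=m$ kills it.

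So do one step with $r=m+1$ and one with $r=m$ for each exponent $m$ of $t_i$ occurring in the $\lambda_j$. This annihilates everything after twice as many steps as there are such exponents, uniformly in the other arguments.

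This is exactly the paper's argument. It uses the operator $\nabla_{r,h}F(g)=F(gh)-c_h^rF(g)$, the bookkeeping with maps of types $(A_m)$ and $(B_m)$, and the observation that $\nabla_{r,h}F=\Delta_hF$ whenever $F$ is central-valued.
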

\begin{proof}
We start by setting some context. We write $G'$ additively as a $G/G'$-module, and we observe that all conjugation maps 
\[ c_g \colon G' \rightarrow G' \quad x \mapsto g^{-1}x g\]
are commuting automorphisms of $G'$. Moreover, in view of Lemma \ref{lem: reduced char}, since $w$ is reduced, we can assume without loss of generality that  
\[ w = \sum_{j \neq i} \lambda_{j} [x_j, x_i] \, ,\]
for a fixed variable $x_i$, and with $\lambda_{j} \in \Z[t_{1}^{\pm 1}, \dots, t_{k}^{\pm 1}]$. Moreover, we write each coefficient as a Laurent polynomial in the variable $t_i$, {\it i.e.}
\[
\lambda_{j}
=
\sum_{m\in F_i}\lambda_{j}^m t_i^m,
\]
where \(F_i\subseteq \Z\) is finite and $\lambda_{j}^m$ does not involve the variable $t_i$.  We denote by $\Psi_{j}^m$ the $\Z$-linear endomorphism of $G'$, corresponding to $\lambda_{j}^m$. Since $\Psi_{j}^m$ is the finite integral linear combination of suitable conjugation maps, it commutes with any $c_g$.

We fix all the variables except the $i$-th one. More precisely, choose arbitrary elements $a_1, \dots,  a_{i-1}, a_{i+1}  \dots, a_{k} \in G$, and define the generalised word map 
\[ W_{a} \colon G \rightarrow Z(G) \quad g \mapsto w(a_1, \dots, a_{i-1}, g, a_{i+1}, \dots, a_k) \, .\]
For notational convenience, define also $\delta_j \colon G \rightarrow G'$ as $\delta_j(g) = [a_j, g]$, so that 
\[
W_a
= \sum_{m\in F_i}\sum_{j\neq i} \Psi_{j}^{m} \cdot c_g^m \cdot \delta_j.
\]

We first study the differences of each summand separately. For each integer $r \in \Z$ and $h \in G$, define the twisted difference operator 
$$\nabla_{r, h} \colon \mathcal{C}(G, G') \rightarrow \mathcal{C}(G, G') \qquad \nabla_{r, h} F(g) = F(gh) - c_h^r \circ F(g) \, .$$

Observe that when $F$ is central-valued
\begin{equation}\label{eq: twisted central}
\nabla_{r, h} F =\Delta_{h} F \,
\end{equation}
for any $r \in \mathbb{Z}$. We study the effect of $\nabla_{r, h }$ in a summand of the form $\Psi c_g^m \delta_j$, for $\Psi \in \End_\Z(G')$ that commutes with $c_g$. Using the commutator identity $[x, yz]=[x,z][x,y]^z$, we get
\[
\begin{aligned}
\nabla_{r,h}\Psi c_g^m \delta_j(g)
&=
\Psi \, c_{gh}^m \delta_j(gh)-\Psi  \, c_g^m \, c_h^r \delta_j(g)\\
&=
\Psi \,c_g^m \, c_h^m \left(\delta_j(h)+c_h\delta_j(g) \right)
-
\Psi\,c_g^mc_h^r\delta_j(g)\\
&= \Psi\,c_g^m\, (c_h^{m+1}-c_h^r)\delta_j(g) + \Psi \,c_g^{m}c_h^m\delta_j(h) \, .
\end{aligned}
\]

Thus, $\nabla_{m+1, \,  h} \Psi c_g^m \delta_j(g)=  \Psi \,c_g^{m}c_h^m\delta_j(h)$.

\smallskip

For notational convenience, let us say that a map $G \rightarrow G'$ is of type ($A_m$) if it is of the form 
\[  g \mapsto \Phi c_g^m \delta_j(g) \, ,\]
where $\Phi$ is a $\Z$-linear endomorphism commuting with $c_g$, and  it is of type ($B_m$) if
\[ g \mapsto \Phi c_g^{m} b \,\]
with $b \in G'$ fixed. 

\medskip

In particular, if $F$ is of type $(A_m)$, we proved that 
\begin{equation} \label{eq: difference iterate}
\nabla_{r, h} F = a_{m} + b_{m} \, \end{equation}
where $a_m$ is of type $(A_{m})$ and $b_{m}$ is of type $(B_{m})$; and that when $r = m+1$, then $a_{m}=0$.

Now, we compute the effect of twisted differences on a map of type $(B_{m})$. Indeed,
\[ \nabla_{r, h} \Phi c_g^m b (g) = \Phi c_g^m c_h^m b- \Phi c_h^r c_g^m b = \Phi  (c_h^m - c_h^r) c_g^m b \, . \]
In particular, if $F$ is of type $(B_m)$, then $\nabla_{r, h} F$ is again of type $(B_m)$, and $\nabla_{m, h} F = 0$.

\smallskip

Therefore, a couple of convenient twisted differences suffice to vanish each summand of $W_a$, namely
$$\nabla_{m,\, h'} \nabla_{m+1,\,h} \Psi c_g^m \delta_j(g) = 0 \, . $$

\medskip

Now $ W_a$ is the sum $\sum_{j=1}^s a_{m_j}$ where each $a_m$ is of type $(A_m)$. Hence, for arbitrary $h_1, \dots, h_r \in G$ we get, by iterated applications of \eqref{eq: difference iterate}, 
\[\nabla_{m_r+1, \, h_r} \dots \nabla_{m_1+1, \, h_1} W_a 
= \sum_{j>r}\tilde{a}_{m_j}
+ \sum_{j=1}^s \tilde{b}_{m_j} \, ,
\]
where $\tilde{a}_{m_j}$ is of type $(A_{m_j})$, and each $\tilde{b}_{m_j}$ is of type $(B_{m_j})$. In particular, when $r = s$, we get 
\[\nabla_{m_s+1, \, h_s}\cdots \nabla_{m_1+1,h_1} W_a 
=  \sum_{j=1}^s \tilde{b}_{m_j} \, , \]
where each $\tilde{b}_{m_j}$ is of type $(B_{m_j})$.

Now, again by iterated applications of formula \eqref{eq: difference iterate}, and using that the twisted difference of a map of type $(B_m)$ is again of type $(B_{m})$,  we get
\[ \nabla_{m_s, \, h_{2s}}  \dots \nabla_{m_1, \, h_{s+1}} \nabla_{m_s+1,h_s} \dots \nabla_{m_1+1,h_1} W_a (g)= 0 \, ,\]
for any choice of elements $h_1, \dots, h_{2s} \in G$. 

\medskip

Finally, since the twisted difference of a central valued map is again central valued, in view of \eqref{eq: twisted central}, 
\[ \Delta_{h_{2s}}  \dots \Delta_{h_1} W_a (g) = \Delta_{h_{2s}}  \dots \Delta_{h_1} w (a_1, \dots, a_{i-1}, g, a_{i+1}, \dots, a_k)  = 0 \, ,
\]
for all $h_1, \dots, h_{2s}, g \in G$. Since the elements $a_j \in G$ are also arbitrary, we conclude that the word map $w$ is polynomial in the $i$-th variable. Since the variable was arbitrary, the word map $w$ is polynomial by \cite[Proposition 3.6]{Leibman}.
\end{proof}

The next step consists of writing an arbitrary commutator word $w \in M_k'$ as the product of reduced words. For that purpose, we use the Boolean cross-effect of $w$, see \cite[Proposition 2.4]{HPV}. More precisely, for each $S \subseteq \{ 1, \dots, k \}$, define 
\[ w_S = \sum_{T \subseteq S} { (-1)^{|S|- |T|}} \rho_T(w) \, . \]

This construction makes each $w_S$ a reduced word (only variables $i \in S$ appear on $w_S$).

\begin{lemma} \label{lem: reduced}
Let $w \in M_k$ be a $k$-variable commutator word and $S \subseteq \{1, \dots, k \}$. The word $w_S$ is reduced in each $i \in S$.
\end{lemma}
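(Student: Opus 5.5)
We need to show that $\rho_{S\setminus\{i\}}(w_S)=0$ in $M_k'$ (additive notation) for each $i\in S$. The plan is a direct inclusion–exclusion computation, using only two facts about the contractions: they are endomorphisms of $F_k$ that induce endomorphisms of $M_k$, hence additive on $M_k'$, and they compose by $\rho_T\circ\rho_{T'}=\rho_{T\cap T'}$.

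The composition rule is immediate on generators: $x_j\mapsto x_j$ if $j\in T\cap T'$ and $x_j\mapsto 1$ otherwise. Since $\rho_T(F_k'')\subseteq F_k''$, each $\rho_T$ descends to an endomorphism of $M_k$ preserving $M_k'$. Hence
\[ \rho_{S\setminus\{i\}}(w_S)=\sum_{T\subseteq S}(-1)^{|S|-|T|}\rho_{T\setminus\{i\}}(w). \]

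Next we split the subsets $T\subseteq S$ according to whether $i\in T$. The map $T\mapsto T\cup\{i\}$ is a bijection from $\{T\subseteq S: i\notin T\}$ onto $\{T\subseteq S: i\in T\}$. Moreover, $T$ and $T\cup\{i\}$ give the same term $\rho_{T}(w)$ but with opposite signs $(-1)^{|S|-|T|}$ and $(-1)^{|S|-|T|-1}$. So the sum cancels in pairs and equals $0$.

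This says exactly that $w_S(x_1,\dots,x_{i-1},1,x_{i+1},\dots,x_k)=1$ in $M_k$, which is the definition of $w_S$ being reduced in the variable $i$. We also observe that $w_S\in M_k'$, since each $\rho_T(w)\in M_k'$. For $j\notin S$ the word $w_S$ does not involve $x_j$ at all, because $\rho_T$ kills $x_j$ for every $T\subseteq S$; this is consistent with the remark preceding the lemma, although it is not needed.

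The only point needing care is that additive notation is legitimate. The sum defining $w_S$ lives in the abelian group $M_k'$, and the $\rho_T$ act additively there because they are group homomorphisms restricted to $M_k'$. With that checked, the argument is the standard vanishing of Boolean cross-effects, and no real obstacle remains.
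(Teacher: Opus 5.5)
Your argument is correct and is essentially the paper's proof. Both pair each $T\subseteq S\setminus\{i\}$ with $T\cup\{i\}$ and observe that, after setting $x_i=1$, the two terms coincide with opposite signs; your explicit use of $\rho_T\circ\rho_{T'}=\rho_{T\cap T'}$ and of the additivity of $\rho_T$ on $M_k'$ just makes the paper's one-line cancellation rigorous.

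One small correction: the paper's definition of reduced uses $\rho_{[k]\setminus\{i\}}$, not $\rho_{S\setminus\{i\}}$. You can either run the identical computation with $\rho_{[k]\setminus\{i\}}$, since $([k]\setminus\{i\})\cap T=T\setminus\{i\}$ for $T\subseteq S$, or use $\rho_S(w_S)=w_S$ to identify the two. That fact is the observation you called unnecessary, and it is exactly what makes your final sentence legitimate.
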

\begin{proof}
Since $\rho_T(w)$ is $M_k'$-valued for all $T \subseteq S$, we use additive notation, {\it i.e.}
\[ w_S = \sum_{T \subseteq S}  (-1)^{|S|- |T|} \rho_T(w) \, .\]

 Fix $i \in S$. For each $T \subseteq S \setminus \{i \}$, evaluating at $x_i = 1$, yields the same word in both $\rho_T(w)$ and $\rho_{T \cup \{i \}}(w)$, but they appear with opposite signs, and thus
\[ w_S(x_1, \dots, x_{i-1}, 1, x_{i+1}, \dots, x_k) = 0 \, . \qedhere \]
\end{proof}

From now on, we abbreviate $[k] = \{ 1 , \dots, k \}$.

\begin{lemma}
Let $G$ be a metabelian group, $w$ a $k$-variable commutator word. Then, 
\[w(\g) =\prod_{S\subseteq [k]}w_S (\g) \, ,
\]
for every $\g \in G^k$.
\end{lemma}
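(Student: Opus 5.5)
The plan is Möbius inversion on the Boolean lattice of subsets of $[k]$, carried out first in the free metabelian group $M_k$ and then transported to $G$. Since $w$ is a commutator word, every contraction $\rho_T(w)$ is again a commutator word and lies in $M_k'$. The group $M_k'$ is abelian, so I will write it additively, as in the proof of Lemma~\ref{lem: reduced}. Then every $w_S$ lies in $M_k'$, and the order of the factors in $\prod_{S\subseteq[k]} w_S$ is irrelevant; the product is simply the sum $\sum_S w_S$ in $M_k'$.

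The key identity will be
\[
\sum_{S\subseteq [k]} w_S \;=\; \sum_{S\subseteq[k]}\ \sum_{T\subseteq S} (-1)^{|S|-|T|}\rho_T(w)
\;=\; \sum_{T\subseteq [k]} \Bigl(\sum_{T\subseteq S\subseteq [k]} (-1)^{|S|-|T|}\Bigr)\rho_T(w).
\]
The inner sum equals $(1-1)^{k-|T|}$. This is $0$ unless $T=[k]$, in which case it is $1$. Since $\rho_{[k]}(w)=w$, I get $w=\sum_S w_S$ as elements of $M_k'$. In multiplicative notation this is $w=\prod_{S\subseteq[k]} w_S$ in $M_k$.

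To pass to $G$, I will use that $G$ is metabelian, so every substitution $x_i\mapsto g_i$ factors through a homomorphism $\varphi_{\g}\colon M_k\to G$ with $\varphi_{\g}(x_i)=g_i$. Each $w_S$ is itself a well-defined element of $M_k$, hence a word in the metabelian sense, and $w_S(\g)=\varphi_{\g}(w_S)$. Applying $\varphi_{\g}$ to the identity in $M_k$ then gives
\[
w(\g)=\prod_{S\subseteq[k]} w_S(\g).
\]
This product lands in the abelian group $G'$, so again the ordering is immaterial.

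The only point needing care is a bookkeeping one. The $w_S$ are defined as elements of $M_k$, possibly not written as explicit words in $F_k$. To handle this, I will either interpret each $w_S$ as the corresponding element of $F_k$ modulo $F_k''$, or choose an arbitrary preimage in $F_k$. Any choice gives the same value in $G$, because $F_k''$ lies in the kernel of every evaluation into a metabelian group. I expect no real obstacle beyond this remark and the binomial cancellation.
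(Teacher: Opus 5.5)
Your proof is correct and follows the same route as the paper. Like the paper, you interchange the double sum, use inclusion--exclusion (your $(1-1)^{k-|T|}$) to see that only $T=[k]$ survives, and conclude with $\rho_{[k]}(w)=w$; the only cosmetic difference is that you prove the identity in $M_k'$ and push it to $G$ through the evaluation homomorphism $M_k\to G$, whereas the paper works directly with the evaluated values in the abelian group $G'$.
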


\begin{proof}
Using additive notation, we write 
\[ \sum_{S\subseteq [k]}w_S (\g) = \sum_{S \subseteq [k]} \sum_{T \subseteq S} (-1)^{|S|- |T|}\rho_T(w)(\g) \, . \]
By an inclusion-exclusion argument, all terms cancel out, except when $S=T= \{1, \dots, k \} $. Therefore,
\[ \sum_{S\subseteq [k]}w_S (\g) = \sum_{S \subseteq [k]} \sum_{T \subseteq S} (-1)^{|S|- |T|}\rho_T(w)(\g) = \rho_{\{1,\dots,k\}}(w)(\g)= w(\g)  \, . \qedhere \]
\end{proof}

\begin{proof}[Proof of Theorem~\ref{thm: polynomial}]
Since $w$ is $Z(G)$-valued, for each $S \subseteq \{1, \dots, k \}$, the cross-effect map $w_S$ is $Z(G)$-valued. Furthermore, by Lemma \ref{lem: reduced}, each $w_S$ is reduced, and thus, in view of Proposition~\ref{prop:  polynomial-reduced}, the word map $w_S \colon G^k \rightarrow Z(G)$ is polynomial. Finally, according to \cite[Theorem 1]{Hu}, the pointwise product $\prod_{S \subseteq [k]} w_S$ is polynomial as well. 
\end{proof}

\section{Proof of Theorem~\ref{thm: metabelian}} \label{sec: final}

We deduce the main theorem from the results of the previous two sections.

\begin{proof}[Proof of Theorem~\ref{thm: metabelian}]
Let $w$ be a group word and let $G$ be a profinite metabelian group. Suppose that $|w\{G \}| < 2^{\aleph_0}$. We shall prove that the set of $w$-values $w\{G\}$ is finite, and then the verbal subgroup $w(G)$ is finite by Turner-Smith ({\it loc. cit.}).

By Proposition~\ref{prop: commutator}, we assume that $w$ is a commutator word. In view of Proposition~\ref{prop: semiconcise}, $[w(G), G]$ is finite, so, passing to a finite quotient if needed, we assume that $w(G)$ is central in $G$. Hence, the word map $w \colon G^k \rightarrow w(G)$ is central valued, and, by Theorem~\ref{thm: polynomial}, it is a polynomial map. Since the image of $w$ has cardinality less than the continuum, it must be finite by Proposition~\ref{prop: polynomial strongly concise}. That is, $w\{G\}$ is finite.
\end{proof}

\bibliographystyle{plain} 
\bibliography{bibli} 

@article{A2,
 author = {Zozaya,~A.},
 title = {Conciseness in compact {{\(R\)}}-analytic groups},
 fjournal = {Journal of Algebra},
 journal = {J. Algebra},
 volume = {624},
 pages = {1--16},
 year = {2023}
}

@article{Detomi,
 author = {Detomi,~E.},
 title = {A note on strong conciseness in virtually nilpotent profinite groups},
 fjournal = {Archiv der Mathematik},
 journal = {Arch. Math. (Basel)},
 volume = {120},
 pages = {115--121},
 year = {2023}
}

@article{DKS,
 author = {Detomi,~E. and Klopsch,~B. and Shumyatsky,~P.},
 title = {Strong conciseness in profinite groups},
 fjournal = {Journal of the London Mathematical Society. Second Series},
 journal = {J. Lond. Math. Soc., II. Ser.},
 volume = {102},
 pages = {977--993},
 year = {2020}
}

@article{KS,
 author = {Khukhro,~E.~I. and Shumyatsky,~P.},
 title = {Strong conciseness of {Engel} words in profinite groups},
 fjournal = {Mathematische Nachrichten},
 journal = {Math. Nachr.},
 volume = {296},
 pages = {2404--2416},
 year = {2023}
}

@article{Iker,
 author = {Heras,~I.~de~las and Pintonello,~M. and Shumyatsky,~P.},
 title = {Strong conciseness of coprime commutators in profinite groups},
 fjournal = {Journal of Algebra},
 journal = {J. Algebra},
 volume = {633},
 pages = {1--19},
 year = {2023}
}

@article{DMS,
 author = {Detomi,~E. and Morigi,~M. and Shumyatsky,~P.},
 title = {Strong conciseness of coprime and anti-coprime commutators},
 fjournal = {Annali di Matematica Pura ed Applicata. Serie Quarta},
 journal = {Ann. Mat. Pura Appl.},
 volume = {200},
 pages = {945--952},
 year = {2021}
}

@article{Martina,
 author = {Conte,~M. and Petschick,~J.~M.},
 title = {Conciseness of first-order formulae},
 fjournal = {Monatshefte f{\"u}r Mathematik},
 journal = {Monatsh. Math.},
 volume = {209},
 pages = {215--240},
 year = {2026}
}

@article{A8,
 author = {Zozaya,~A.},
 title = {On generalisations of conciseness},
 fjournal = {Archiv der Mathematik},
 journal = {Arch. Math. (Basel)},
 volume = {125},
 pages = {599--603},
 year = {2025}
}

@book{Munkres,
 author = {Munkres,~J.~R.},
 title = {Topology},
 edition = {2nd},
 year = {2000},
 publisher = {Upper Saddle River},
 address = {Prentice Hall (NJ)}
}

@book{words,
 author = {Segal,~D.},
 title = {Words. {N}otes on verbal width in groups},
 fseries = {London Mathematical Society Lecture Note Series},
 series = {Lond. Math. Soc. Lect. Note Ser.},
 volume = {361},
 year = {2009},
 publisher = {Cambridge University Press},
 address = {Cambridge}
}

@article{TS,
 author = {Turner-Smith,~R.~F.},
 title = {Finiteness conditions for verbal subgroups},
 fjournal = {Journal of the London Mathematical Society},
 journal = {J. Lond. Math. Soc.},
 volume = {41},
 pages = {166--176},
 year = {1966}
}

@article{A7,
 author = {Heras,~I.~de~las and Zozaya,~A.},
 title = {Strong conciseness and equationally {N}oetherian groups},
 fjournal = {Annali di Matematica},
 journal = {Ann. Mat. Pura Appl.},
 year = {2026},
 volume={205},
 pages={749--758}
}

@article{Ba,
 author = {Bachmuth, S.},
 title = {Automorphisms of free metabelian groups},
 fjournal = {Transactions of the American Mathematical Society},
 journal = {Trans. Am. Math. Soc.},
 issn = {0002-9947},
 volume = {118},
 pages = {93--104},
 year = {1965}
}

@article{DMS2,
 author = {Detomi, E. and Morigi, M. and Shumyatsky, P.},
 title = {On countable coverings of word values in profinite groups.},
 fjournal = {Journal of Pure and Applied Algebra},
 journal = {J. Pure Appl. Algebra},
 issn = {0022-4049},
 volume = {219},
 pages = {1020--1030},
 year = {2015}
}

@article{merz,
 author = {Merzljakov,~Ju.~I.},
 title = {Verbal and marginal subgroups of linear groups},
 fjournal = {Soviet Mathematics. Doklady},
 journal = {Sov. Math., Dokl.},
 volume = {8},
 pages = {1538--1541},
 year = {1968}
}

@article{Ivanov,
 author = {Ivanov,~S.~V.},
 title = {P. {Hall}'s conjecture on the finiteness of verbal subgroups},
 fjournal = {Soviet Mathematics},
 journal = {Sov. Math.},
 volume = {33},
 number = {6},
 pages = {59--70},
 year = {1990}
}

@article{Her,
 author = {Herfort,~W.},
 title = {Factorizing profinite groups into two abelian subgroups.},
 fjournal = {International Journal of Group Theory},
 journal = {Int. J. Group Theory},
 issn = {2251--7650},
 volume = {2},
 number = {1},
 pages = {45--47},
 year = {2013}
}

@article{HPV,
 author = {Hartl, M. and Pirashvili, T. and Vespa, C.},
 title = {Polynomial functors from algebras over a set-operad and nonlinear {Mackey} functors},
 fjournal = {IMRN. International Mathematics Research Notices},
 journal = {Int. Math. Res. Not.},
 issn = {1073-7928},
 volume = {2015},
 number = {6},
 pages = {1461--1554},
 year = {2015}
}

@article{Hu,
 author = {Hu, Y.-Q.},
 title = {Polynomial maps and polynomial sequences in groups},
 fjournal = {Journal of Group Theory},
 journal = {J. Group Theory},
 issn = {1433-5883},
 volume = {27},
 number = {4},
 pages = {739--787},
 year = {2024}
}

@article{Passi,
 author = {Passi, I. B. S.},
 title = {Dimension subgroups},
 fjournal = {Journal of Algebra},
 journal = {J. Algebra},
 issn = {0021-8693},
 volume = {9},
 pages = {152--182},
 year = {1968}
}

@book{Tao,
 author = {Tao,~T.},
 title = {Higher order {Fourier} analysis},
 fseries = {Graduate Studies in Mathematics},
 series = {Grad. Stud. Math.},
 issn = {1065-7339},
 volume = {142},
 isbn = {978-0-8218-8986-2},
 year = {2012},
 publisher = {Providence, RI: American Mathematical Society (AMS)}
}

@article{bounded,
 author = {Fern{\'a}ndez-Alcober,~G.~A. and Morigi,~M.},
 title = {Outer commutator words are uniformly concise.},
 fjournal = {Journal of the London Mathematical Society. Second Series},
 journal = {J. Lond. Math. Soc., II. Ser.},
 issn = {0024-6107},
 volume = {82},
 number = {3},
 pages = {581--595},
 year = {2010}
}

@article{Ito,
 author = {It{\^o},~N.},
 title = {{\"U}ber das {Produkt} von zwei abelschen {Gruppen}},
 fjournal = {Mathematische Zeitschrift},
 journal = {Math. Z.},
 issn = {0025-5874},
 volume = {62},
 pages = {400--401},
 year = {1955}
}

@article{Schur,
 author = {Schur,~I.},
 title = {{\"U}ber die {Darstellung} der endlichen {Gruppen} durch gebrochene lineare {Substitutionen}.},
 fjournal = {Journal f{\"u}r die Reine und Angewandte Mathematik},
 journal = {J. Reine Angew. Math.},
 issn = {0075-4102},
 volume = {127},
 pages = {20--50},
 year = {1904}
}

@phdthesis{Stroud,
    author    = {Stroud,~P.~W.},
    title     = {Topics in the theory of verbal subgroups},
    school    = {University of Cambridge},
    year      = {1966},
    type      = {PhD thesis},
    address   = {Cambridge}
}

@article{Leibman,
 author = {Leibman,~A.},
 title = {Polynomial mappings of groups},
 fjournal = {Israel Journal of Mathematics},
 journal = {Isr. J. Math.},
 issn = {0021-2172},
 volume = {129},
 pages = {29--60},
 year = {2002}
}

@article{semiconciseness,
 author = {Delizia,~C. and Shumyatsky,~P. and Tortora,~A.},
 title = {On semiconcise words},
 fjournal = {Journal of Group Theory},
 journal = {J. Group Theory},
 issn = {1433-5883},
 volume = {23},
 number = {4},
 pages = {629--639},
 year = {2020}
}
\end{document}